\documentclass[11pt]{amsart}

\addtolength{\textwidth}{2cm} \addtolength{\hoffset}{-1cm}
\addtolength{\textheight}{1cm} \addtolength{\voffset}{-0.5cm}
\usepackage{amscd}
\usepackage{xypic}  
\usepackage{amssymb}
\usepackage{amsthm}
\usepackage{epsfig}
\usepackage{paralist}
\usepackage{comment}
\usepackage{url}
\usepackage{lipsum}   
\usepackage{tikz}

\usepackage[all,cmtip]{xy}
\usepackage{amsmath}
\usepackage{color}

\newtheorem{thm}{Theorem}[section]  

\newtheorem{lemma}[thm]{Lemma}

\newtheorem{proposition}[thm]{Proposition}

\newtheorem{corollary}[thm]{Corollary}

\theoremstyle{definition}

\newtheorem{remark}[thm]{Remark}

\def\ker{\operatorname{ker}}

\def\im{\operatorname{im}}

\def\c1{\operatorname{c_1}}
\def\c2{\operatorname{c_2}}

\def\rk{\operatorname{rk}}

\def\CC{{\mathbb C}}

\def\PP{{\mathbb P}}

\def\DD{{\mathbb D}}

\def\G{{\mathcal G}}
\def\L{{\mathcal L}}

\def\O{{\mathcal O}}

\def\E{{\mathcal E}}

\def\F{{\mathcal F}}
\def\K{{\mathcal K}}

\def\Q{{\mathcal Q}} 
\def\K{{\mathcal K}}

\def\c{\mathfrak{c}}

\def\UU{\mathfrak{U}}
\def\cong{\simeq}

\def\+{\oplus}               
\def\*{\otimes}     
\def\geq{\geqslant}
\def\leq{\leqslant}              

\def\id{\operatorname{id}}

\def\Ext{\operatorname{Ext}}

\def\Shext{\operatorname{ \mathfrak{e}\mathfrak{x}\mathfrak{t} }}

\def\det{\operatorname{det}}

\author[C.~Ciliberto]{Ciro Ciliberto}
\address{Ciro Ciliberto, Dipartimento di Matematica, Universit{\`a} di Roma Tor Vergata, Via della Ricerca Scientifica, 00173 Roma, Italy}
\email{cilibert@mat.uniroma2.it}

\author[F.~Flamini]{Flaminio Flamini}
\address{Flaminio Flamini, Dipartimento di Matematica, Universit{\`a} di Roma Tor Vergata, Via della Ricerca Scientifica, 00173 Roma, Italy} 
\email{flamini@mat.uniroma2.it}

\author[A.~L.~Knutsen]{Andreas Leopold Knutsen}
\address{Andreas Leopold Knutsen, Department of Mathematics, University of Bergen, Postboks 7800,
5020 Bergen, Norway}
\email{andreas.knutsen@math.uib.no}

\title{Ulrich bundles on a general blow--up of the plane}
\begin{document}

\maketitle

\begin{abstract} We prove that on $X_n$, the plane blown--up at $n$ general points, there are Ulrich line bundles with respect to  a line bundle corresponding to curves of degree $m$ passing simply through the $n$ blown--up points, with $m\leq 2\sqrt{n}$ and such that the line bundle in question is very ample on $X_n$.  We prove that the number of these Ulrich line bundles tends to infinity with $n$.   
 We also prove the existence of  slope--stable rank--$r$ Ulrich vector bundles on $X_n$,  for $n\geq 2$ and  any $r \geq 1$  and we compute the dimensions of their moduli spaces. These computations  imply  that $X_n$ is {Ulrich wild}. \end{abstract}

\section*{Introduction}

Let $X$ be a smooth irreducible projective variety of dimension $n \geqslant 1$ and let $H$ be a very ample divisor on $X$. A vector bundle $\E$  on $X$ is said to be an \emph{Ulrich vector bundle} with respect to the polarization $H$ if $h^i(\E(-pH))=0$ for all $i \geq 0$ and all $1 \leq p \leq n$. Ulrich bundles first appeared in commutative algebra in the 1980's, where they have been considered because they enjoy some extremal cohomological properties.  After that the attention of the algebraic geometers on these bundles has been carried by the beautiful paper \cite {ESW}, where, among other things, the authors compute the Chow form of a projective variety $X$ using Ulrich bundles on $X$, if they exist.  
In recent years there has been a good amount of work  on Ulrich bundles (for surveys see for instance \cite{Co, be2, CMP}), mainly investigating the following problems: given any polarization $H$ on a variety $X$, does there exist an Ulrich vector bundle with respect to $H$? Or, even more generally, given a variety $X$, does there exist a very ample line bundle $H$ on $X$
and an Ulrich vector bundle on $X$ with respect to $H$? What is the smallest possible rank for an Ulrich bundle on a given variety $X$? If Ulrich bundles exist, are they stable, and what are their moduli? Although a lot is known about these problems for some specific classes of varieties (curves, Segre, Veronese, Grassmann varieties, rational scrolls, complete intersections, some classes of surfaces like Del Pezzo, abelian, K3 surfaces, some surfaces of general type, etc.) the above   questions  are still open in their full  generality even for surfaces. In particular the question for which pairs $(X,H)$ there are Ulrich line bundles on $X$ with respect to $H$ is open, though this occurrence seems to be rather rare. Moreover in the few known cases in which they have been proved to exist, except the case of curves, they are finitely many and of a low number. 

In the present paper we investigate the existence of Ulrich bundles on the blow--up $X_n$ of the complex projective plane at $n$ very general points with respect to line bundles $\xi_{n,m}$ corresponding to curves of degree $m$ passing simply through the $n$ blown--up points, with $m\leq 2\sqrt{n}$ and such that the line bundle in question is very ample.
Surprisingly enough, our results show that such a surface carries several Ulrich line bundles,  and actually their number increases and tends to infinity with $n$, see Theorem \ref {thm:main1} and Corollary \ref {cor:inf}. In Theorem \ref {thm:class} we classify all these line bundles for $7\leq n\leq 10$  and $m=4$  (the cases $n\leq 6$ and $m=3$ corresponding to the Del Pezzo case being well known already, see \cite{CKM, PT}). Then, making iterated extensions and deforming,  we prove the existence of  slope-stable rank--$r$ Ulrich vector bundles on $X_n$ for  $n\geq 2$ and any  $r \geq 1$ (see Theorem \ref {thm:higher}), we compute the dimension of the moduli spaces of the bundles in question and prove that they are reduced. These computations show that $X_n$,  with $n\geq 2$,  is \emph{Ulrich wild} (recall that, as suggested by an analogous definition in \cite {DG}, a variety $X$ is said to be Ulrich wild if it possesses families of dimension $p$ of pairwise non--isomorphic, indecomposable, Ulrich bundles for arbitrarily large $p$). Note that in the literature there are only very few cases  of varieties known to carry stable Ulrich bundles of infinitely many ranks, and even fewer of 
\emph{any} rank (namely curves and Del Pezzo surfaces). Finally we mention that in \cite [Cor. 3.8]{ACM} the authors prove the existence of rank 2 Ulrich vector bundles on the blown--up planes with respect to an ample line bundle of the form $\xi_{n,m}$, without investigating stability or moduli.

\medskip

{\bf Acknowledgements:} Ciro Ciliberto and Flaminio Flamini are members of \linebreak GNSAGA of the Istituto Nazionale di Alta Matematica ``F. Severi"  and acknowledge support from  the MIUR Excellence Department Project awarded to the Department of Mathematics, University of Rome Tor Vergata, 
CUP E83C18000100006.  Andreas Leopold Knutsen acknowledges support from the Trond Mohn Foundation Project ``Pure Mathematics in Norway'' and grant 261756 of the
Research Council of Norway.

\section{Preliminaries}\label{sec:prel}

\subsection{}\label{ssec:uno}
In this paper we will denote by $\pi: X_n\rightarrow \PP^2$ the blow--up of the complex projective plane at  $n\geq 1$  very general points $p_1,\ldots, p_n$. We will denote by $E_1,\ldots, E_n$ the exceptional divisors  contracted by $\pi$ to  $p_1,\ldots, p_n$ respectively, and by $L$ the pull--back via $\pi$ of a general line of $\PP^2$. For integers $d, m_1,\ldots, m_n$, consider the linear system

\begin{equation}\label{eq:syst}
\Big |dL-\sum_{i=1}^nm_iE_i\Big |
\end{equation} 
on $X_n$.  If $d, m_1,\ldots, m_n$  are non--negative,  this is the strict transform on $X_n$ of the linear system of curves of degree $d$ in $\PP^2$ with multiplicities at least $m_1,\ldots, m_n$ at 
$p_1,\ldots, p_n$ respectively.  If one of the integers $m_1,\ldots, m_n$ is negative, e.g., $m_1$ is negative, then this means that $m_1E_1$ is in the fixed part of the linear system \eqref {eq:syst}, if this system is non--empty.
We will denote  the linear system  \eqref {eq:syst}, or the corresponding line bundle   $\O_{X_n}(dL-\sum_{i=1}^nm_iE_i)$,  by
$$(d;m_1,\ldots, m_n)$$
and we will use exponential notation for repeated multiplicities. 

We will set
$$
\xi_{n,m}=(m;1^n).
$$
This line bundle is ample as soon as $n\geq 3$ and $\xi_{n,m}^2=m^2-n>0$ (see \cite[Cor. p. 154]{Ku}) and it is very ample if $n\geq 3$ and 
$$
 m\geq 2\sqrt {n+4}-3
$$
(see \cite [Thm. 1] {ST}). This result is not optimal and a natural conjecture is that $\xi_{n,m}$ is very ample as soon as
$$
\frac {m(m+3)}2-n\geq 5
$$
(see again \cite [Conj. p. 2521] {ST}).   In any event, if $m_n$ is the minimum $m$ such that $\xi_{n,m}$ is very ample, we have
\begin{equation}\label{eq:mag}
m_n\leq \left \lceil{ 2\sqrt {n+4}-3}\right \rceil
\end{equation}
if $n\geq 3$, whereas $m_1=2$ and $m_2=3$.

We will need the following:

\begin{lemma}\label{lem:num} If $n\geq 3$, there is an $m< 2\sqrt n$ such that  $\xi_{n,m}$ is very ample.
\end{lemma}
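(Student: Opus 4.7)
The plan is to derive the lemma directly from the quoted estimate \eqref{eq:mag}, $m_n\leq\lceil 2\sqrt{n+4}-3\rceil$ (valid for $n\geq 3$), by checking that this integer is strictly smaller than $2\sqrt{n}$. I would set $m:=\lceil 2\sqrt{n+4}-3\rceil$; the bound from \cite{ST} then guarantees that $\xi_{n,m}$ is very ample, so it remains only to verify $m<2\sqrt{n}$. Writing $y:=2\sqrt{n+4}-3$, one has $\lceil y\rceil<y+1=2\sqrt{n+4}-2$ (strict, because in any case $\lceil y\rceil\leq\lfloor y\rfloor+1<y+1$ when $y\notin\ZZ$ and $\lceil y\rceil=y<y+1$ when $y\in\ZZ$). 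So it will suffice to establish the real-variable inequality $2\sqrt{n+4}-2\leq 2\sqrt{n}$, which rearranges to $\sqrt{n+4}\leq\sqrt{n}+1$, and upon squaring becomes $3\leq 2\sqrt{n}$, i.e.\ $n\geq 9/4$. This is clearly guaranteed by the hypothesis $n\geq 3$, and concatenating yields $m_n\leq m<2\sqrt{n+4}-2\leq 2\sqrt{n}$, as required.

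I anticipate no real obstacle here: the whole argument is a one-line arithmetic comparison grafted onto the very ampleness bound from \cite{ST}. The only point requiring a moment's care is to track where the inequality is strict, and this is handled entirely by the strict estimate $\lceil y\rceil<y+1$; in particular, strictness in the final chain survives even in the borderline case $n=3$, where numerically $2\sqrt{n+4}-2$ and $2\sqrt{n}$ are close to each other.
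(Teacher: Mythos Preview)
Your proof is correct and follows essentially the same route as the paper: both arguments reduce to showing $\lceil 2\sqrt{n+4}-3\rceil<2\sqrt n$ via the real inequality $2\sqrt{n+4}-2\leq 2\sqrt n$ for $n\geq 3$. You are simply more explicit than the paper about where strictness enters (through $\lceil y\rceil<y+1$), but the substance is identical.
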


\begin{proof} To prove the assertion one has to prove that $2\sqrt n> m_n$. By \eqref {eq:mag}, this is implied by $2\sqrt n> \left \lceil{ 2\sqrt {n+4}-3}\right \rceil$, which  holds if $2\sqrt n\geq 2\sqrt {n+4}-2$. This is true as soon as $n\geq 3$. \end{proof}

\subsection{} Given a linear system $\L=(d;m_1,\ldots, m_n)$,  its \emph{virtual dimension} is
$$
\dim_v(\L)=\frac {d(d+3)}2-\sum_{i=1}^n\frac {m_i(m_i+1)}2.
$$
If $\dim(\L)=\dim_v(\L)$ we will say that the system $	\L$ is \emph{regular}. 

One computes
$$
\dim_v(\L)=\chi(\L)-1
$$
hence, by the Riemann--Roch theorem, one has
$$
\dim(\L)=\dim_v(\L)+h^1(\L)-h^2(\L).
$$
Since  $h^2(\L)=h^0(K_n\otimes \L^\vee)$, where $K_n=(-3;-1^n)$ 
denotes the canonical bundle of $X_n$, and 
$$
K_n\otimes \L^\vee=(-3-d;-m_1-1,\ldots, -m_n-1)
$$
one has $h^2(\L)=0$ as soon as $d>-3$. In this case one has
\begin{equation}\label{eq:sequo}
\dim(\L)=\dim_v(\L)+h^1(\L)
\end{equation}
and $\L$ is regular if and only if $h^1(\L)=0$.

\begin{lemma}\label{lem:1} Let $\L$ be a linear system on $X_n$ of the form
$$
(d; 1^h,0^k, (-1)^{n-h-k})
$$
where $d\geq 0$. If $\dim_v(\L)\geq -1$, then  $\L$ is regular.
\end{lemma}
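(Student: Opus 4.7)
The plan is to reduce the statement to the classical fact that $h$ very general points of $\PP^2$ impose independent conditions on the complete linear system of plane curves of degree $d$, whenever $h \leq \binom{d+2}{2}$.

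First, I would eliminate the entries equal to $-1$. For each index $j$ with $m_j = -1$, the exceptional divisor $E_j$ appears with coefficient $+1$ in $\L$, so $\L \cdot E_j = -1$ and $\L|_{E_j} \cong \O_{\PP^1}(-1)$ has vanishing cohomology. The short exact sequence
$$
0 \longrightarrow \L(-E_j) \longrightarrow \L \longrightarrow \L|_{E_j} \longrightarrow 0
$$
then gives $h^i(\L) = h^i(\L(-E_j))$ for every $i$. Iterating over all indices with $m_j=-1$, I replace $\L$ by $\L' := (d; 1^h, 0^{n-h})$; the virtual dimension is unchanged, since each $m_j = -1$ contributes $(-1)\cdot 0/2 = 0$ to the formula for $\dim_v$.

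Next I would descend to $X_h$. The line bundle $\L'$ is the pull--back, under the blow--down $X_n\to X_h$ of the last $n-h$ exceptional divisors, of the line bundle $(d; 1^h)$ on $X_h$, and cohomology is preserved under this pull--back. So the problem reduces to showing $h^1((d; 1^h)) = 0$ on $X_h$ whenever $d \geq 0$ and $h \leq \binom{d+2}{2}$.

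Finally, since $d\geq 0$ and $K_{X_h}\otimes (d; 1^h)^\vee = (-3-d;-2^h)$ visibly has no sections, $h^2((d;1^h)) = 0$; combined with \eqref{eq:sequo}, the vanishing $h^1=0$ becomes equivalent to $h^0((d;1^h)) = \binom{d+2}{2} - h$, i.e.\ to the statement that $h$ very general points of $\PP^2$ impose independent conditions on degree--$d$ plane curves. A standard induction on $h$ settles this: assuming the claim for $h-1$ points (with $h\leq\binom{d+2}{2}$), the system $|(d;1^{h-1})|$ has dimension $\binom{d+2}{2} - h\geq 0$, hence a proper base locus in $X_h$, and a very general additional point $p_h$ avoids this base locus, thereby imposing a further independent condition. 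The only non--formal ingredient in the entire proof is this classical dimension count for very general points; the remaining steps are bookkeeping with exact sequences on $X_n$, so no serious obstacle is expected.
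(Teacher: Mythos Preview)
Your proposal is correct and follows essentially the same approach as the paper. The paper argues more tersely, observing that the sum $E$ of the exceptional divisors carrying multiplicity $-1$ is a fixed component, so $\dim(\L)=\dim(\L(-E))$, and then invokes regularity of $(d;1^h,0^k)$ by generality of the imposed simple points; your exact--sequence argument with $\L|_{E_j}\cong\O_{\PP^1}(-1)$ makes this cohomological reduction explicit and also pins down the descent to $X_h$, but the substance is identical.
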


\begin{proof} Set $E=E_{h+k+1}+\cdots+E_n$.  For each curve $E_i\cong \PP^1$, for $h+k+1\leq i\leq n$ one has 
$\L\cdot E_i=-1$, hence $E$ is in the fixed part of $\L$, if this is non--empty. So we have
$\dim(\L)=\dim(\L(-E))$.
The linear system $ \L(-E)$ is $(d; 1^h,0^k)$. One has $\dim_v(\L)=\dim_v(\L(-E))$.
By $\dim_v(\L(-E))\geq -1$ and by the generality of the imposed simple base points $p_1,\ldots, p_h$, it follows that $\L(-E)$ is regular. Hence we have
$$
\dim(\L)=\dim(\L(-E))=\dim_v(\L(-E))=\dim_v(\L)
$$
as wanted.  \end{proof}

\subsection{} We will use the following result that is a consequence of  \cite[Cor. 4.6] {AC} (see \cite [Thm. 2.3]{CDGK} and \cite[Thms. 1.1, 1.3 and 1.4]{CC}): 

\begin{proposition}\label{thm:wdv} Consider a linear system of the form
$$
\L_{d,n,\delta,k}=(d;2^\delta,1^k, 0^{n-\delta-k})
$$
(and permuted multiplicities), with $k\geq 1$.
If
$$
\dim_v(\L_{d,n,\delta,k})=\frac {d(d+3)}2-3\delta-k\geq 0
$$
then $\L_{d,n,\delta,k}$ is regular, i.e., 
$$
h^1(X_n,\L_{d,n,\delta,k})=0
$$
and the general curve in $\L_{d,n,\delta,k}$ on $X_n$ is smooth and irreducible.
\end{proposition}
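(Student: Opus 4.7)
The plan is to deduce the statement from the Alexander--Hirschowitz theorem in $\PP^2$ (the case $k=0$, covered by \cite[Cor.~4.6]{AC}) by imposing the $k\geq 1$ simple base points one at a time on the auxiliary system $\L' := (d; 2^\delta)$, while exploiting the hypothesis $\dim_v \geq 0$ to stay clear of the exceptional cases of AH.

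First, the $n-\delta-k$ points with multiplicity $0$ impose no condition, so one may focus on $\L' := (d; 2^\delta)$, whose virtual dimension is
\[
\dim_v(\L') \;=\; \dim_v(\L_{d,n,\delta,k}) + k \;\geq\; k \;\geq\; 1.
\]
The only exceptional cases of the Alexander--Hirschowitz theorem in $\PP^2$ are $(d,\delta) \in \{(2,2),(4,5)\}$, both with $\dim_v(\L') = -1$; our hypothesis thus rules them out. Hence $\L'$ is regular, $\dim(\L') = \dim_v(\L') \geq 1$, and the base locus of $\L'$ is a proper subvariety of $X_n$. Now I would impose the $k$ simple base points one at a time at very general points: by genericity each new point lies off the base locus of the current system, so the dimension drops by exactly one. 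The system remains non--empty throughout, since at every stage the dimension is at least $\dim_v(\L_{d,n,\delta,k}) \geq 0$. After $k$ steps
\[
\dim(\L_{d,n,\delta,k}) \;=\; \dim(\L') - k \;=\; \dim_v(\L_{d,n,\delta,k}),
\]
which via \eqref{eq:sequo} (applicable since $d \geq 0 > -3$ forces $h^2 = 0$) is equivalent to $h^1(X_n, \L_{d,n,\delta,k}) = 0$.

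For smoothness and irreducibility of the general member, I would view $\L_{d,n,\delta,k}$ as a linear series on $X_n$. The regularity just established, together with very generality of the base points, implies that the base locus off the assigned points is at worst a finite union of points and $(-1)$--curves; Bertini then yields smoothness of the general member away from the base locus, while the multiplicity--$2$ conditions at very general points produce ordinary nodes that resolve to smooth points on $X_n$. Irreducibility follows from $\L_{d,n,\delta,k}^2 > 0$ together with the general--member analysis of \cite[Thm.~2.3]{CDGK} and \cite[Thms.~1.1, 1.3, 1.4]{CC}.

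The main obstacle I expect is precisely this last point: controlling the singularity type at the assigned multiplicity--$2$ points (showing ordinary nodes rather than cusps or worse) and ruling out reducibility patterns involving $(-1)$--curves for small $d$. This is exactly what the cited theorems address, and invoking them yields the full statement.
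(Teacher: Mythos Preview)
The paper gives no proof beyond the sentence ``is a consequence of \cite[Cor.~4.6]{AC} (see \cite[Thm.~2.3]{CDGK} and \cite[Thms.~1.1, 1.3, 1.4]{CC})''. So there is no argument to compare against; your proposal is already more explicit than what the paper offers.

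Your regularity argument is correct. Passing to $\L'=(d;2^\delta)$, the virtual dimension jumps by $k\geq 1$, so the two Alexander--Hirschowitz exceptional pairs $(d,\delta)=(2,2),(4,5)$ (both with $\dim_v=-1$) are excluded, $\L'$ is regular of non--negative dimension, and imposing $k$ very general simple points drops the dimension by exactly one each time while keeping the system non--empty. Together with $h^2=0$ (since $d\geq 0>-3$) this gives $h^1=0$. This is a clean way to make the regularity part of the citation explicit.

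Your sketch for smoothness and irreducibility, however, has a genuine gap. The assertion ``irreducibility follows from $\L_{d,n,\delta,k}^2>0$'' is not available under the stated hypotheses: for $(d,\delta,k)=(3,2,1)$ one has $\dim_v=9-6-1=2\geq 0$ and $k\geq 1$, but $\L^2=9-8-1=0$. Worse, in this example every plane cubic singular at two general points $p_1,p_2$ contains the line $\ell_{12}$ through them (Bézout), hence splits as $\ell_{12}+Q$ with $Q$ a conic through $p_1,p_2$; the general member of $(3;2^2,1)$ on $X_n$ is therefore smooth but \emph{reducible}. So neither $\L^2>0$ nor irreducibility is automatic, and your Bertini/base--locus/ordinary--node discussion does not supply an independent argument. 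What actually controls this is the bound $\delta\leq\binom{d-1}{2}$ implicit in \cite[Cor.~4.6]{AC} (which guarantees an irreducible nodal model), and this is exactly why the paper simply cites those references. Your final deferral to \cite{AC,CDGK,CC} is thus the right move, but the intermediate heuristic should be dropped rather than presented as a partial argument.
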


\subsection{}

We will need the following proposition (cf. also \cite[Prop. (2.1)]{Ca}):

\begin{proposition}\label{lem:ulr1} Let $S$ be a smooth irreducible projective surface and let $H$ be a very ample divisor on $S$.

A line bundle $\L \not \cong \O_S$ is Ulrich if and only if it is of the form $\L=\O_S(C)$, where $C$ is an effective divisor on $S$ satisfying \\
\begin{inparaenum}
\item [(i)] $C \cdot H = \frac{1}{2} H \cdot (3H+K_S)$;\\
\item  [(ii)] $\frac{1}{2} (C^2 - C \cdot K_S) + \chi(\O_S)-H^2=0$;\\
\item  [(iii)] $h^1(\O_C(K_S+H))=0$;\\
\item  [(iv)] the restriction map  $r: H^0(\O_S(K_S+2H)) \to H^0(\O_C(K_S+2H))$ 
is  injective or surjective.
\end{inparaenum}

Moreover, the divisor $C$ can be taken to be a smooth curve.
\end{proposition}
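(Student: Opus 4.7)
The plan is to translate the defining Ulrich vanishings $h^i(\L(-pH))=0$ for $i\geq 0$ and $p\in\{1,2\}$ into the four conditions (i)--(iv), splitting the task into a numerical part (Riemann--Roch on $S$) and a cohomological part (two standard exact sequences combined with Kodaira vanishing and Serre duality on $S$ and on $C$).

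For the numerical part, Riemann--Roch rewrites $\chi(\L(-pH))$ as a quadratic polynomial in the intersection numbers of $\L$ with $H$ and $K_S$; setting $\chi(\L(-H))=\chi(\L(-2H))=0$ and subtracting yields (i), while substituting back gives (ii). To show that $\L$ has global sections, pick a smooth $D\in|H|$ and use the sequence $0\to\L(-H)\to\L\to\L|_D\to 0$: the Ulrich vanishings give $h^i(\L)\cong h^i(\L|_D)$, and Riemann--Roch on $D$, together with (i) and adjunction, yields $\chi(\L|_D)=H^2>0$; hence $h^0(\L)\geq H^2>0$ and we may write $\L=\O_S(C)$ with $C$ effective.

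For the cohomological content, consider for $p\in\{1,2\}$ the sequences
$$0\to\O_S(K_S+pH-C)\to\O_S(K_S+pH)\stackrel{r_p}{\to}\O_C(K_S+pH)\to 0,$$
$$0\to\O_S(-pH)\to\O_S(C-pH)\to\O_C(C-pH)\to 0.$$
Kodaira vanishing gives $H^i(\O_S(K_S+pH))=0$ for $i>0$ and, via Serre duality, $H^i(\O_S(-pH))=0$ for $i<2$; combined with adjunction $\O_C(C-pH)=\O_C(K_C-K_S-pH)$ and Serre duality on the Gorenstein curve $C$, one obtains
$$h^0(\L(-pH))=h^1(\O_C(K_S+pH)),\ h^1(\L(-pH))=\dim\coker r_p,\ h^2(\L(-pH))=\dim\ker r_p.$$
Thus (iii) is precisely $h^0(\L(-H))=0$, while the two alternatives in (iv) say respectively $h^2(\L(-2H))=0$ and $h^1(\L(-2H))=0$. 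The $(\Rightarrow)$ direction is now immediate.

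For $(\Leftarrow)$ the implication chain runs as follows: (iii) makes $C-H$ non-effective, hence also $C-2H$, so $h^0(\L(-2H))=0$; coupled with $\chi(\L(-2H))=0$ this forces $h^1(\L(-2H))=h^2(\L(-2H))$, and (iv) then makes both vanish. In particular $r_2$ is injective, so $K_S+2H-C$ is non-effective, hence so is $K_S+H-C$, giving $r_1$ injective and $h^2(\L(-H))=0$; $\chi(\L(-H))=0$ then yields $h^1(\L(-H))=0$. Finally, Bertini applied to $|\L|$ allows us to replace $C$ by a smooth general member. The main delicate point is arranging the implication chain so that only one alternative in (iv) (injective or surjective), rather than both, is required; this is made possible by the asymmetric use of $K_S+H$ in (iii) and $K_S+2H$ in (iv).
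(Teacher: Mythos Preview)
Your proof is correct and follows essentially the same route as the paper's: the same short exact sequence $0\to\O_S(-pH)\to\L(-pH)\to\O_C(C-pH)\to 0$, the same Serre--duality identification of the coboundary with the restriction map $r_p$, and the same Riemann--Roch reduction of the numerical constraints to (i)--(ii). Your organization into a numerical part and a cohomological part, and your more explicit chain of implications in the $(\Leftarrow)$ direction (using non-effectivity of $C-H$, $C-2H$, $K_S+2H-C$, $K_S+H-C$ in turn), makes the logic slightly more transparent than the paper's compressed treatment, but the mathematical content is the same.

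One small point: your final appeal to Bertini for the smoothness of $C$ needs base-point-freeness of $|\L|$, which your argument (restriction to $D\in|H|$) only shows has sections; the paper fills this by citing the standard fact that Ulrich bundles are globally generated, and you should do the same.
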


We will say that $\L$ as above is \emph{determined} by $C$.

\begin{proof}
  It is well-known that an Ulrich bundle is globally generated, whence it is of the form $\L=\O_S(C)$ for $C$ an effective nonzero divisor, which can even be taken to be a smooth curve. Hence, $\L$ is Ulrich if and only if
$h^i(\O_S(C-jH))=0$ for  $i=0,1,2, \; j=1,2$. 
  From the short exact sequence 
  \begin{equation} \label{eq:c3}
          0 \longrightarrow \O_S(-jH) \longrightarrow \O_S(C-jH)  \longrightarrow \omega_C(-K_S-jH) \longrightarrow 0
    \end{equation}
  we see that $\L$ is Ulrich if and only if
  \begin{eqnarray}
    \label{eq:c4}
    & h^0(\omega_C(-K_S-jH))=0, \; j=1,2, &\\
    \label{eq:c5}
    & \;\;\;\;\mbox{the coboundary maps} \;  H^1(\omega_C(-K_S-jH)) \to H^2(\O_S(-jH)) \\  \nonumber & \mbox{are isomorphisms}, \; j=1,2.&
  \end{eqnarray}
Clearly the vanishing for $j=2$ in \eqref{eq:c4} is implied by the one for $j=1$. Thus, by Serre duality \eqref{eq:c4}--\eqref{eq:c5} 
are  equivalent to
\begin{eqnarray}
    \label{eq:c6}
    & h^1(\O_C(K_S+H))=0, & \\ 
    \label{eq:c7}
  & \;\;\;\;\mbox{the restriction maps} \; r_j: H^0(\O_S(K_S+jH)) \to H^0(\O_C(K_S+jH)) \\
  & \nonumber \mbox{are isomorphisms}, \; j=1,2.&
  \end{eqnarray}
  By \eqref{eq:c6} and the fact that $h^1(\O_S(K_S+jH))=h^2(\O_S(K_S+jH))=0$, the domain and target of $r_j$ have dimensions $\chi(\O_S(K_S+jH))$ and $\chi(\O_C(K_S+jH))$, respectively. Morover, it is easy to see that $r_1$ is injective as soon as $r_2$ is. 
Hence, given \eqref{eq:c6}, condition \eqref{eq:c7} is equivalent to
  \begin{eqnarray}
    \label{eq:c8}
    & \chi(\O_S(K_S+jH))=\chi(\O_C(K_S+jH)), \; \; j=1,2, & \\
\label{eq:c9} & \;\;\;\;\mbox{the restriction map} \; r_2:H^0(\O_S(K_S+2H)) \to H^0(\O_C(K_S+2H)) \; \mbox{is injective}.&
\end{eqnarray}
Thus, $\O_S(C)$ is Ulrich if and only if \eqref{eq:c6}, \eqref{eq:c8} and \eqref{eq:c9} are satisfied.
Condition \eqref{eq:c6} is condition (iii) in the statement of the proposition, whereas \eqref{eq:c8} is equivalent to (i)--(ii) by Riemann--Roch. Finally,
\eqref{eq:c9} is equivalent to (iv), as the domain and target have the same dimensions, again by \eqref{eq:c8}.
\end{proof}

\section{Ulrich line bundles on $X_n$}\label{sec:linbund}

In this section we will prove the existence of Ulrich line bundles on $X_n$.  Note that the Del Pezzo case ($n\leq 6$ and $m=3$ in the notation below) has  already  been worked out in \cite [Prop. 2.19]{CKM} and \cite [Thm. 1.1]{PT}. 
This is our result:

\begin{thm}\label{thm:main1} Let   $n\geq 3$ be an integer  and let $m$ be an integer such that $\xi_{n,m}$ is very ample   on $X_n$ and $m\leq 2\sqrt {n}$  
(such an $m$ exists by Lemma \ref {lem:num}). 
Let $d$ be a positive integer such that
\begin{equation}\label{eq:prim}
\frac {2m-3-\sqrt {8n+1}}2\leq d\leq \frac {2m-3+\sqrt {8n+1}}2
\end{equation}
and 
\begin{equation}\label{eq:sec}
\frac {3(m-1)-\sqrt {4n-m^2+1}}2< d< \frac {3(m-1)+\sqrt {4n-m^2+1}}2.
\end{equation}
Set
$$
\delta=\frac {m^2}2-\frac m2 (2d+3)+\frac {d^2+3d+2}2=\frac {(d-m)(d-m+3)}2+1
$$
and
$$
k=n+3m(d+1)-\frac {m(5m-3)}2-(d^2+3d+2).
$$
Then $\delta$ and $k$ are integers such that
\begin{equation}\label{eq:delta}
0\leq \delta\leq n,
\end{equation}
\begin{equation}\label{eq:k}
1\leq k\leq n,
\end{equation}
and
\begin{equation}\label{eq:kd}
\delta+k\leq n.
\end{equation}

Moreover 
$$
\L_{d,n,k,\delta}=(d;2^\delta, 1^k, 0^{n-\delta-k})
$$
(and permuted multiplicities) is an Ulrich line bundle  on $X_n$ with respect to $\xi_{n,m}$. 
\end{thm}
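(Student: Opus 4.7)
The plan is to apply Proposition \ref{lem:ulr1} to $S = X_n$, $H = \xi_{n,m}$, and $\L = \L_{d,n,k,\delta}$, so that the proof reduces to verifying the constraints on $\delta$ and $k$, together with the four Ulrich conditions (i)--(iv). I would first dispose of the arithmetic claims: integrality of $\delta$ follows because $(d-m)(d-m+3)$ is always even, and integrality of $k$ because $m(5m-3)$ is always even. The lower bound $\delta \geq 0$ holds unconditionally since $(d-m)(d-m+3) \geq -2$ for every integer $d$, while $\delta \leq n$ is equivalent to \eqref{eq:prim} after completing the square in $d-m$. Similarly, $k \geq 1$ is equivalent to \eqref{eq:sec}, whereas $k \leq n$ holds automatically as the resulting quadratic in $d$ has negative discriminant $1 - m^2 < 0$. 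Finally, $\delta + k \leq n$ factors as $(d - 2m + 1)(d - 2m + 2) \geq 0$, which holds for every integer $d$.

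Next, I would verify conditions (i) and (ii) of Proposition \ref{lem:ulr1} by direct substitution using
$$H^2 = m^2 - n, \; H \cdot K_n = n - 3m, \; C^2 = d^2 - 4\delta - k, \; C \cdot H = dm - 2\delta - k, \; C \cdot K_n = 2\delta + k - 3d.$$
Together, (ii) and Riemann--Roch give $\chi(\L) = m^2 - n = \xi_{n,m}^2 \geq 1$, so $\dim_v(\L) \geq 0$; combined with $k \geq 1$, Proposition \ref{thm:wdv} then ensures that $\L$ is regular and its general member $C$ is a smooth irreducible curve, which can therefore serve as the $C$ in Proposition \ref{lem:ulr1}.

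For (iii), the short exact sequence
$$0 \to \O_{X_n}(K_n + H - C) \to \O_{X_n}(K_n + H) \to \O_C(K_n + H) \to 0$$
combined with Kodaira vanishing reduces the task to $h^2(\O_{X_n}(K_n + H - C)) = 0$, equivalently (by Serre duality) $h^0(\O_{X_n}(C-H)) = 0$. Now $C - H = (d-m; 1^\delta, 0^k, (-1)^{n-\delta-k})$; after absorbing the $n-\delta-k$ fixed exceptional components, the closed form for $\delta$ forces the virtual dimension of the residual system to be exactly $-1$. If $d \geq m$, Lemma \ref{lem:1} yields the vanishing; if $d < m$, the class has negative intersection with the nef class $L$, which likewise forces vanishing. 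The argument for (iv) is parallel: I would show $r$ is injective by proving $h^0(\O_{X_n}(K_n + 2H - C)) = 0$. Writing $K_n + 2H - C = (2m-3-d; (-1)^\delta, 0^k, 1^{n-\delta-k})$ and stripping the $\delta$ exceptional fixed components yields $(2m-3-d; 0^{\delta+k}, 1^{n-\delta-k})$, whose virtual dimension is again $-1$; the same dichotomy (Lemma \ref{lem:1} when $2m-3-d \geq 0$, nefness of $L$ otherwise) delivers the vanishing.

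The main obstacle is the algebraic bookkeeping: the argument rests on the twin identities $\dim_v(C - H) = \dim_v(K_n + 2H - C) = -1$, which convert the Ulrich cohomology requirements into the regularity framework of Lemmas \ref{lem:1} and \ref{thm:wdv}. Once these two identities are isolated and the bounds \eqref{eq:prim}--\eqref{eq:sec} are translated into the constraints $\delta \leq n$ and $k \geq 1$, the remaining verifications are mechanical, and the statement for permuted multiplicities follows from the $S_n$-symmetry of the very general configuration $p_1, \ldots, p_n$.
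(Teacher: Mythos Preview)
Your proof is correct and follows the same overall strategy as the paper: verify the numerical constraints \eqref{eq:delta}--\eqref{eq:kd} via discriminant analysis, invoke Proposition~\ref{thm:wdv} to obtain a smooth irreducible $C$, and then check conditions (i)--(iv) of Proposition~\ref{lem:ulr1}, with (iii) and (iv) both reducing to the emptiness of a linear system of virtual dimension $-1$ handled by Lemma~\ref{lem:1}.

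The one substantive difference is in the treatment of (iii). The paper argues geometrically: assuming $h^1(\O_C(K_n+\xi_{n,m}))>0$, it interprets this as specialness of the divisor cut by $|K_n+\xi_{n,m}|$ on $C$, then uses a B\'ezout count (comparing $d(m-3)$ with $(d-3)(m-3)$) to force a general $D\in|K_n+\xi_{n,m}|$ to be a component of some $D'\in|K_n+C|$, whence $|C-\xi_{n,m}|\neq\varnothing$, contradicting Lemma~\ref{lem:1}. Your route is purely cohomological: the restriction sequence together with Kodaira vanishing for $K_n+\xi_{n,m}$ and Serre duality yield $h^1(\O_C(K_n+\xi_{n,m}))=h^0(C-\xi_{n,m})$ directly, after which you apply Lemma~\ref{lem:1} (or the nefness of $L$ when $d<m$) exactly as the paper does. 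Your argument is shorter, works uniformly without the implicit assumption $m>3$ needed for the strict B\'ezout inequality, and is more careful than the paper in separating out the negative--degree case where Lemma~\ref{lem:1} does not literally apply; the paper's argument, on the other hand, gives a concrete geometric picture of why the vanishing holds.
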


\begin{proof} Take for granted \eqref {eq:delta}, \eqref {eq:k} and \eqref {eq:kd} for the time being. A direct computation of 
$\dim_v(\L_{d,n,k,\delta})$ as in Proposition \ref{thm:wdv} shows that
$$
\dim_v(\L_{d,n,k,\delta})=m^2-n-1\geq 0
$$
because
$$
m^2-n=\xi_{n,m}^2>0.
$$
By Proposition \ref {thm:wdv}, the system $\L_{d,n,k,\delta}$ is non--empty and the general curve $C$ in $\L_{d,n,k,\delta}$ is smooth  since $k \geqslant 1$ by \eqref{eq:k}. Now we apply Proposition \ref {lem:ulr1} to $S=X_n$, $H=\xi_{n,m}$ and $C$ as above. To prove the theorem we have to check conditions (i)--(iv) in the statement of  Proposition  
\ref{lem:ulr1}. 

As for (i), with an easy computation we have
$$
C\cdot \xi_{n,m}=dm-2\delta-k=\frac {3m(m-1)}2-n=\frac 12 \xi_{n,m}\cdot(3\xi_{n,m}  +  K_n)
$$
as wanted.

As for (ii), note that $\chi(\O_{X_n})=1$, so
$$
\frac{1}{2} (C^2 - C \cdot K_n) + \chi(\O_S)-\xi_{n,m}^2=\frac 12(d^2+3d-6\delta-2k )-m^2+n+1
$$
which is easily computed to be 0, as desired.

As for (iii), suppose, by contradiction, that $h^1(\O_{C}(K_n+\xi_{n,m}))>0$. So the divisors cut out by the system $K_n+\xi_{n,m}$ on $C$ are special, i.e., they are contained in divisors of the canonical series, cut out on $C$ by the system $K_n+\L_{d,n,k,\delta}$. 
Note that 
$$
K_n+\xi_{n,m}=(m-3;0^n) \quad \text{and}\quad K_n+\L_{d,n,k,\delta}=(d-3;1^\delta, 0^k, (-1)^{n-\delta-k}).
$$
Let $D$ be a general curve in the system $K_n+\xi_{n,m}$ (i.e., a general curve of degree $m-3$), that cuts on $C$ a divisor consisting of $d(m-3)$ distinct points. This divisor has to be contained in a curve $D'$ of  the system $K_n+\L_{d,n,k,\delta}$. Since $D$ has degree $m-3$ and $D'$ has degree $d-3$ and  $d(m-3)>(d-3)(m-3)$, 
 by B\'ezout's theorem $D$ is contained in $D'$. This implies that the system $K_n+\L_{d,n,k,\delta}-(K_n+\xi_{n,m})=\L_{d,n,k,\delta}-\xi_{n,m}$ is effective. We have
$$
\L_{d,n,k,\delta}-\xi_{n,m}=(d-m;1^\delta, 0^k,(-1)^{n-\delta-k})
$$
 whose virtual dimension is
$$
\frac {(d-m)(d-m+3)}2-\delta=-1.
$$
Then, by Lemma \ref {lem:1}, $\L_{d,n,k,\delta}-\xi_{n,m}$ is empty, a contradiction. 

Finally, as for (iv) we want to prove that the map 
$$
r: H^0(K_n+2\xi_{n,m})\to H^0((K_n+2\xi_{n,m})_{|C})
$$
is injective. Note that 
$$K_n+2\xi_{n,m}=(2m-3;1^n).$$
If $2m-3<d$ it is clear that $r$ is injective. If  $2m-3\geq d$, the kernel of $r$ is $H^0( K_n + 2\xi_{n,m}-\L_{d,n,k,\delta})$. Now
$$
 K_n +  2\xi_{n,m}-\L_{d,n,k,\delta}=(2m-3-  d ;(-1)^\delta, 0^k, 1^{n-\delta-k})
$$
 whose virtual dimension is 
$$
\frac {(2m-3-d)(2m-d)}2-n+  k + \delta  =-1.
$$
By Lemma \ref {lem:1} we have that $ K_n + 2\xi_{n,m}-\L_{d,n,k,\delta}$ is empty, i.e., $h^0( K_n + 2\xi_{n,m}-\L_{d,n,k,\delta})=0$ and $r$ is injective, as wanted. 

To finish the proof, we are left to prove \eqref {eq:delta}, \eqref {eq:k} and \eqref {eq:kd}. This is a mere computation. For instance, $\delta\geq 0$ is equivalent to
$$
d^2-(2m-3)d+m^2-3m+2\geq 0.
$$
The discriminant of this quadratic in $d$ is 
$$
(2m-3)^2-4(m^2-3m+2)=1,
$$
hence we need
$$
\text{either}\quad d\geq \frac {2m-3+1}2=m-1, \quad \text {or}\quad d\leq \frac {2m-3-1}2=m-2
$$
which is always true.

The inequality $\delta\leq n$ is equivalent to
\begin{equation}\label{eq:primo}
d^2-(2m-3)d+m^2-3m+2-2n\leq 0.
\end{equation}
The discriminant in $d$ is computed to be $8n+1$. Since  \eqref {eq:prim} holds, \eqref {eq:primo} also holds.

The inequality $k> 0$ is equivalent to
\begin{equation}\label{eq:primos}
2d^2-6d(m-1)+5m^2-9m+4-2n< 0.
\end{equation}
The discriminant in $d$ is computed to be $4n-m^2+1$ which is non--negative  by assumption. Since \eqref {eq:sec} holds, then  \eqref {eq:primos} holds as well.

The inequality $k\leq n$ is equivalent to 
$$
2d^2-6d(m-1)+5m^2-9m+4\geq  0.
$$
The discriminant is computed to be $4(1-m^2)<0$, hence $k\leq n$ holds. 

Finally $\delta+k\leq n$ is equivalent to
$$
d^2-(4m-3)d+4m^2-6m+  2  \geq 0
$$
whose discriminant is 1, so that $\delta+k\leq n$ if
$$
\text{either}\quad d\geq \frac {4m-3+1}2=2m-1, \quad \text {or}\quad d\leq \frac {4m-3-1}2=2m-2
$$
which is always true.
\end{proof}

As an immediate consequence we have:

\begin{corollary}\label{cor:inf} There is a sequence $\{h_n\}_{n\in \mathbb N}$ such that $\lim_{n\to +\infty}h_n=+\infty$ and such that for $n\gg 0$ there are $h_n$ distinct Ulrich line bundles on $X_n$ with respect to $\xi_{n,m}$.
\end{corollary}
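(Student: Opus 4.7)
The plan is to apply Theorem \ref{thm:main1} for each $n$, varying the integer parameter $d$, and to count how many distinct Ulrich line bundles are produced with respect to a single fixed polarization. Each admissible $d$ yields an Ulrich line bundle $\L_{d,n,k,\delta}$ whose class in $\Pic(X_n)$ has $d$ as its coefficient of $L$, so distinct values of $d$ give non-isomorphic Ulrich line bundles. It therefore suffices to show that the number of integers $d$ satisfying both \eqref{eq:prim} and \eqref{eq:sec} tends to infinity with $n$.

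First, for each $n$ sufficiently large, I use Lemma \ref{lem:num} together with the estimate \eqref{eq:mag} to choose $m=m_n$ such that $\xi_{n,m}$ is very ample and $m\leq \lceil 2\sqrt{n+4}-3\rceil$, which in particular gives $m<2\sqrt{n}$. This is the polarization to which the Ulrich line bundles will refer.

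Next, I estimate the length of the open interval in \eqref{eq:sec}, which equals $\sqrt{4n-m^2+1}$. Using $m\leq 2\sqrt{n+4}-2$, one gets $m^2\leq 4n+20-8\sqrt{n+4}$, hence $4n-m^2+1\geq 8\sqrt{n+4}-19$, which grows to infinity like $\sqrt{n}$. Consequently this interval contains of the order of $n^{1/4}$ integers. I then verify that \eqref{eq:sec} is contained in \eqref{eq:prim} for $n$ large: the comparison of right endpoints reduces to $m+\sqrt{4n-m^2+1}\leq \sqrt{8n+1}$, equivalent after squaring to the polynomial inequality $m^2(4n-m^2+1)\leq 4n^2$; since $m^2\leq 4n$ and $4n-m^2+1=O(\sqrt{n})$, the left-hand side is $O(n^{3/2})$, which is easily dominated. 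The left-endpoint comparison is automatic since $m\geq 0$.

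Setting $h_n$ equal to the number of integers $d$ in the resulting intersection yields $h_n\to \infty$, and by Theorem \ref{thm:main1} each such $d$ produces a distinct Ulrich line bundle on $X_n$ with respect to $\xi_{n,m}$. The only ``obstacle'' is the routine interval arithmetic above; the structural content is fully contained in Theorem \ref{thm:main1}.
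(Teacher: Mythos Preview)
Your approach is exactly what the paper intends: the corollary is stated as an immediate consequence of Theorem~\ref{thm:main1}, and counting the admissible values of $d$ (each giving a distinct class in $\Pic(X_n)$ via its $L$--coefficient) is the natural way to read it.

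There is one loose step. You assert $4n-m^2+1=O(\sqrt{n})$ in order to get the right--endpoint containment $m+\sqrt{4n-m^2+1}\leq \sqrt{8n+1}$. This requires a \emph{lower} bound on $m$ of the form $m\geq 2\sqrt{n}-O(1)$, which is not guaranteed if $m=m_n$ is taken (as in the paper) to be the \emph{minimum} $m$ with $\xi_{n,m}$ very ample: you only know $m_n\leq \lceil 2\sqrt{n+4}-3\rceil$, not that it is close to this bound. Two easy fixes:
\begin{itemize}
\item Simply choose $m=\lceil 2\sqrt{n+4}-3\rceil$ (very ample by \cite{ST}); then $m\geq 2\sqrt{n+4}-3$ gives $4n-m^2+1\leq 12\sqrt{n+4}-24$, and your $O(n^{3/2})$ estimate goes through.
\item Or drop the containment claim altogether: since $m\geq 0$ forces the left endpoint of \eqref{eq:sec} to exceed that of \eqref{eq:prim} (as you note), the intersection has length at least
\[
\min\Bigl(\sqrt{4n-m^2+1},\ \tfrac{1}{2}\bigl(\sqrt{8n+1}+\sqrt{4n-m^2+1}-m\bigr)\Bigr)\ \geq\ \tfrac{1}{2}\sqrt{4n-m^2+1},
\]
using $m<2\sqrt{n}<\sqrt{8n+1}$, and this already tends to infinity by your lower bound $4n-m^2+1\geq 8\sqrt{n+4}-19$.
\end{itemize}
With either adjustment the argument is complete.
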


\begin{remark}\label{rem:better} The hypotheses of Theorem \ref {thm:main1} could be a bit relaxed. The strict inequalities in \eqref {eq:sec} are required in order that $k\geq 1$, and this is used to imply that the general curve $C$ in $\L_{d,n,\delta,k}$ is smooth. However $k\geq 1$ is not necessary for the general curve $C$ in $\L_{d,n,\delta,k}$ to be smooth. For example, for $n=2$ and $m=3$, the general curve in  $\L_{3,2,1,0}=(3;2,0)$ is smooth and this line bundle is Ulrich (see also \cite [Prop. 2.19]{CKM} and \cite [Thm. 1.1]{PT}). We will use this later. 
\end{remark}

\section{Classification of Ulrich line bundles on $X_n$ with $7\leq n\leq 10$}\label{sec:class}

Theorem \ref {thm:main1} proves the existence of several Ulrich line bundles on $X_n$ but does not give a full classification of such line bundles. However, in principle, it is possible to 
pursue such a classification. We want to show how this works   for Ulrich line bundles with respect to $\xi_{n,4}$, in the case in which $m=4$ is the minimum such that $\xi_{n,m}$ is very ample. This   corresponds to $7\leq n\leq 10$. As we said already, the Del Pezzo case $m=3$  is well known, hence we do  not dwell on it here.

As we know from Proposition \ref {lem:ulr1}, an Ulrich line bundle on $X_n$ is of the form $\O_{X_n}(C)$ where $C$ is a smooth curve on $X_n$ verifiying properties (i)--(iv). Let us restate properties (i)--(iv) in our setting, with $7\leq n\leq 10$, in which $m=4$. We have $H=\xi_{n,4}$ and $C$ belongs to a system $(d;m_1,\ldots, m_n)$, and it is easy to check that properties (i)--(iv) read now as follows: \\
\begin{inparaenum}
\item [(i)] $\xi_{n,4}\cdot C=18-n$;\\
\item [(ii)] $p_a(C)=d-2$;\\
\item [(iii)] $h^1(\O_C(L))=0$ (recall that $L$ is the pull--back to $X_n$ of a general line of $\PP^2$);\\
\item  [(iv)] the restriction map  $r: H^0(\O_S(K_n+2\xi_{n,4})) \to H^0(\O_C(K_n+2\xi_{n,4}))$ is either injective or surjective, where
$K_n+2\xi_{n,4}=(5;1^n)$.
\end{inparaenum}

We need a preliminary lemma:

\begin{lemma}\label{lem:red} In the above setting, the curve $C$ is irreducible.
\end{lemma}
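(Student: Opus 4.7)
The plan is to assume $C$ is reducible and extract a contradiction from properties (i)--(iii). Since $C$ is smooth as a subscheme of $X_n$, a decomposition $C = C_1 + \cdots + C_s$ with $s \geq 2$ into irreducible components forces the $C_i$ to be smooth and pairwise disjoint (any shared point would be a node of $C$). Writing $d_i = L \cdot C_i \geq 0$ and $g_i = g(C_i)$, one has $\sum_i d_i = d$, and additivity of $\chi(\O_C)$ on a disjoint union gives $p_a(C) = 1 - s + \sum_i g_i$; combined with (ii), this yields the identity $\sum_i g_i = d + s - 3$.

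Property (iii), $h^1(\O_C(L)) = 0$, splits as $h^1(\O_{C_i}(L)) = 0$ for each $i$, which on a smooth curve is equivalent to the nonspeciality inequality $d_i \geq 2 g_i - 1$. Summing over $i$ and substituting the identity above gives $d + s \leq 6$, so in particular $d \leq 4$.

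Next I would record the elementary classification: an irreducible smooth curve on $X_n$ is either an exceptional divisor $E_j$ (with $d_i = 0$ and $g_i = 0$), or the strict transform of an irreducible plane curve of degree $d_i \geq 1$ with nonnegative multiplicities $m_k^{(i)}$ at the base points, and therefore has $g_i = \binom{d_i - 1}{2} - \sum_k \binom{m_k^{(i)}}{2} \leq \binom{d_i - 1}{2}$; in particular $g_i = 0$ whenever $d_i \leq 2$. A short case check for $d \in \{0,1,2,3,4\}$ under the constraints $s \geq 2$, $d + s \leq 6$ and $\sum_i g_i = d + s - 3$ then closes the argument. For $d = 0$ all components are exceptional, the identity forces $s = 3$, and property (i) gives $18 - n = \xi_{n,4} \cdot C = 3$, contradicting $n \leq 10$. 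For $d = 1$ the only admissible configuration is a line plus one exceptional divisor (disjoint from it), and property (i) then requires $n \geq 13$, absurd. For $d \in \{2, 3, 4\}$ the maximum of $\sum_i g_i$ attainable with $\sum_i d_i = d$ and the bound $g_i \leq \binom{d_i - 1}{2}$ (refined by $d_i \geq 2 g_i - 1$) is strictly smaller than $d + s - 3$, producing the final contradictions.

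The main obstacle is organising the case enumeration in the last step cleanly; each configuration is eliminated by the same elementary principle, namely that smooth plane curves of low degree carry low genus, and hence the identity $\sum_i g_i = d + s - 3$ cannot be satisfied once $s \geq 2$.
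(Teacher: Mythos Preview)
Your argument contains a genuine gap at the crucial step. You claim that $h^1(\O_{C_i}(L)) = 0$ ``on a smooth curve is equivalent to the nonspeciality inequality $d_i \geq 2g_i - 1$'', but this is false: the implication only goes one way. The condition $d_i \geq 2g_i - 1$ implies $h^1=0$, not conversely; a general line bundle of degree $g$ on a curve of genus $g \geq 2$ is nonspecial yet has degree $< 2g-1$. From $h^1(\O_{C_i}(L)) = 0$ alone, Riemann--Roch only yields $h^0(\O_{C_i}(L)) = d_i - g_i + 1 \geq 0$, i.e.\ $d_i \geq g_i - 1$, and summing this over $i$ gives the vacuous $d \geq d - 3$. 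Thus your bound $d + s \leq 6$, and hence $d \leq 4$, is unjustified, and without it the case enumeration cannot begin.

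The gap is repairable, though you have not done so. Since $\pi$ is birational, any non-exceptional component $C_i$ maps birationally onto a plane curve of degree $d_i$; when $d_i \geq 2$ this image is non-degenerate, so restriction of $|L|$ gives $h^0(\O_{C_i}(L)) \geq 3$, and combined with $h^1 = 0$ one gets $d_i \geq g_i + 2$. Summing this (and using $g_i = 0$ when $d_i \leq 1$) forces $3a + 2b + e \leq 3$, where $a,b,e$ count components with $d_i \geq 2$, $d_i = 1$, $d_i = 0$ respectively; with $s \geq 2$ only your cases $d=0$ and $d=1$ survive, and these you handle correctly via property~(i). But as written the proof is incorrect.

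For comparison, the paper's argument is entirely different and avoids casework. It uses that Ulrich bundles are globally generated, so one may take $C$ general in $|C|$; if this general member were reducible, $|C|$ would be composed with a pencil $|F|$, i.e.\ $C \sim aF$ with $a \geq 2$. The condition $p_a(C) = d - 2 \geq 0$ then forces $p_a(F) \geq 1$, and the exact sequence $0 \to \O_{X_n}((a-1)F) \to \O_{X_n}(C) \to \O_F \to 0$ yields $h^1(\O_{X_n}(C)) > 0$, contradicting the Ulrich property directly.
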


\begin{proof} First we notice that the case $d=1$ cannot happen. In this case in fact, since $\O_{X_n}(C)$ is globally generated, $C$ would be clearly irreducible while from (ii) we would have $p_a(C)=-1$ a contradiction. So we have $d\geq 2$ and therefore $p_a(C)=d-2\geq 0$.

If $C$ were reducible, since $\O_{X_n}(C)$ is globally generated, this means that $|C|$ would be composed with a pencil $|F|$, i.e., $C\sim aF$ with $a\geq 2$. Then $p_a(C)=ap_a(F)-a+1\geq 0$ hence $p_a(F)\geq 1$.  Consider the exact sequence
$$
0\longrightarrow \O_{X_n}((a-1)F)\longrightarrow  \O_{X_n}(aF)= \O_{X_n}(C)\longrightarrow \O_{F}(C)=\O_F\longrightarrow 0
$$
and the cohomology sequence
$$
H^1(\O_{X_n}(C))\longrightarrow H^1(\O_F)\longrightarrow H^2(\O_{X_n}((a-1)F)).
$$
We would have $h^1(\O_F)=p_a(F)\geq 1$ and $h^2(\O_{X_n}((a-1)F))=0$, hence $h^1(\O_{X_n}(C))>0$. This gives a contradiction since  $h^1(\O_{X_n}(C))=0$ (see, for instance, \cite [(3.1)]{be2}).\end{proof}

\begin{thm}\label{thm:class} The Ulrich line bundles on $X_n$ with $7\leq n\leq 10$ with respect to $\xi_{n,4}$ are:\\
\begin{inparaenum}
\item [(a)] $(6;2^6, 1^{n-6})$;\\
\item [(b)] $(5;2^3, 1^{n-4},0)$;\\
\item [(c)] $(4;2, 1^{n-4},0^3)$;\\
\item [(d)] $(3;1^{n-6},0^6)$;\\
\item [(e)] $(2;0^{10})$ only for $n=10$;\\
\item [(f)] $(7;2^{10})$ only for $n=10$,\\
\end{inparaenum}
with permutations of the multiplicities.
\end{thm}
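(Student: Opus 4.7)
The approach combines the numerical constraints of conditions (i) and (ii) with the irreducibility result of Lemma~\ref{lem:red} to reduce the classification to a finite Diophantine enumeration, which is then analyzed case by case.

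First, by Lemma~\ref{lem:red}, $C$ is irreducible; since $\xi_{n,4}\cdot C = 18-n>0$ it is distinct from any $E_i$, and hence $m_i = C\cdot E_i\ge 0$ for all $i$. Writing $C\sim(d;m_1,\ldots,m_n)$, condition (i) translates to $\sum m_i = 4d+n-18$. Combining this with the identity $C^2 - C\cdot K_n = 2(\xi_{n,4}^2-\chi(\O_{X_n})) = 30-2n$, coming from Proposition~\ref{lem:ulr1}(ii), yields the second numerical equation $\sum m_i^2 = d^2-d+n-12$.

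Next, $\sum m_i\ge 0$ forces $d\ge (18-n)/4$, and the Cauchy--Schwarz inequality applied to these two sums gives $(4d+n-18)^2 \le n(d^2-d+n-12)$, providing an upper bound on $d$. A quick check shows that $d\in\{3,4,5,6\}$ for $n\in\{7,8,9\}$ and $d\in\{2,3,4,5,6,7\}$ for $n=10$. For each such pair $(n,d)$, I would enumerate the non-negative integer tuples $(m_1,\ldots,m_n)$, up to permutation, satisfying both equations. This produces a short finite list of candidate classes. For each candidate I would then verify: (A) non-emptiness and existence of a smooth irreducible general member in the system on $X_n$, using Proposition~\ref{thm:wdv} when all multiplicities lie in $\{0,1,2\}$ together with Lemma~\ref{lem:red}; (B) condition (iii), observing that $\deg(\omega_C\otimes \O_C(-L)) = 2(d-2)-2-d = d-6$, so $h^1(\O_C(L))=0$ is automatic for $d\le 5$ and must be verified by an explicit adjoint/canonical-series argument for $d=6,7$; and (C) condition (iv), which reduces to showing $h^0(\O_{X_n}((5-d;\,1-m_1,\ldots,1-m_n)))=0$, granted by Lemma~\ref{lem:1} whenever this class has virtual dimension $-1$.

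The principal obstacle lies in the elimination step: a priori the Diophantine enumeration may yield candidate tuples other than those listed in (a)--(f), and one must argue carefully that every such extra configuration either does not contain a smooth irreducible curve on $X_n$ for very general base points, or fails one of conditions (iii), (iv). Once this is done, the theorem reduces to the direct verification that each of the six families (a)--(f) satisfies (i)--(iv), which is a routine computation.
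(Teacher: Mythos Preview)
Your strategy is sound and leads to the same classification, but it differs from the paper's proof in the mechanism used to bound $d$. The paper embeds $C$ via $|\xi_{n,4}|$ as a curve of degree $18-n$ in $\PP^{14-n}$ and invokes Castelnuovo's genus bound together with condition (ii), $p_a(C)=d-2$, to obtain $d\leq 6$ for $7\leq n\leq 9$ and $d\leq 7$ for $n=10$ (with the extremal case $n=10$, $p_a=5$ identified immediately as the canonical curve, i.e.\ case (f)). Your Cauchy--Schwarz inequality $(\sum m_i)^2\leq n\sum m_i^2$ is more elementary and yields exactly the same range of $d$; it trades geometric input for a purely arithmetic bound. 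What you lose is the immediate conceptual identification of case (f), which in your approach just appears as one item in the Diophantine list.

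For the enumeration itself the paper does not brute--force: for $d=6$ it combines the two equations into $\sum (m_i-1)(m_i-2)/2=0$, forcing $m_i\leq 2$ at once. Your approach would reach the same conclusion via case--checking, which is a little longer but equally valid. One point you should make explicit: the enumeration does produce an extra candidate beyond (a)--(f), namely $(5;3,1^{n-1})$, and this is the case you flagged as the ``principal obstacle''. It is eliminated because the residual system $K_n+2\xi_{n,4}-C=(0;-2,0^{n-1})$ has $h^0=1$ (the divisor $2E_1$), so (iv) fails; this is a one--line check, not a genuine obstacle. Also note that Proposition~\ref{thm:wdv} requires $k\geq 1$, so it does not cover cases (e) and (f) directly; for these you need a separate (easy) argument for non--emptiness and smoothness of the general member.
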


\begin{proof} Let $\O_{X_n}(C)$ be an Ulrich line bundle on $X_n$, with $C$ smooth and irreducible, by Lemma \ref {lem:red}. We suppose $\O_{X_n}(C)$ is of the form
$(d;m_1,\ldots, m_n)$ on $X_n$.

Consider the embedding $\phi_{\xi_{n,4}}: X_n\to S\subset \PP^{14-n}$, where $S$ is the image of $X_n$, and denote still by $C$ the image of $C$ on $S$. If $C$ is degenerate, then
clearly $d\leq 4$. Otherwise $C$ is a non--degenerate smooth irreducible curve of degree $18-n$ in $\PP^{14-n}$ (see property (i) above). By Castelnuovo's bound, we have $p_a(C)\leq 4$ if $7\leq n\leq 9$ and $g(C)\leq 5$ if $n=10$. In addition, if $n=10$ and $g(C)=5$, then $C\subset \PP^4$ is a canonical curve, i.e., $\xi_{10,4}$ cuts out on $C$ the canonical series, hence we  are in case (f).  By property (ii), we have $d\leq 6$ if $7\leq n\leq 9$ and $d\leq 7$ if $n=10$. On the other hand we have $d\geq 2$ (see the 
proof of Lemma \ref {lem:red}). 

Suppose $d=6$, hence $p_a(C)=4$. Then we have
\begin{align*}
& 10-\sum_{i=1}^n \frac {m_i(m_i-1)}2=4\\
&24-\sum_{i=1}^n m_i=18-n
\end{align*}
i.e., 
\begin{align*}
& 6-\sum_{i=1}^n \frac {m_i(m_i-1)}2=0\\
& 6-\sum_{i=1}^n (m_i-1)=0
\end{align*}
whence
$$
\sum_{i=1}^n \frac {m_i(m_i-1)}2=\sum_{i=1}^n (m_i-1)
$$
i.e.,
$$
\sum_{i=1}^n \frac {(m_i-1)(m_i-2)}2=0
$$
which yields $m_i\leq 2$ for  $ 1  \leq i\leq n$. This implies that we are in case (a). A similar computation shows that in case $d=5$ the only possible cases are the one in (b) and $(5;3, 1^{n-1})$. This line bundle however is not Ulrich because property (iv) is not verified, since the kernel of the map $r$ is $H^0(\L)$ with $\L = 
K_n + 2 \xi_{n,m} - C =(0;(-2), 0^{n-1})$, that is non--zero. 
If $2\leq d\leq 4$ it is easily seen that the remaining possible cases are the ones in (c)--(e). 

Finally one has to check that in all cases (a)--(f) we do have Ulrich line bundles. To see this, by Proposition   \ref {lem:ulr1} we have only to prove that properties (iii) and (iv) above are verified. Property (iii) is trivially verified by degree reasons in cases (b)--(e). In case (a) it is verified because otherwise the 
system $(2;1^6, 0^{n-6})$ would be non--empty, a contradiction. In case (f) it is verified because otherwise the 
system $(3;1^{10})$ would be non--empty, a contradiction again.
As for property (iv), note that the map $r$ is clearly injective in cases (a) and (f). As for the other cases one looks at the kernel of the map $r$, which is the $H^0$ of the following line bundles:\\
\begin{inparaenum}
\item [$\bullet$] $(0;(-1)^3, 0^{n-4}, 1)$ in case (b);\\
\item [$\bullet$] $(1;-1, 0^{n-4}, 1^3)$ in case (c);\\
\item [$\bullet$] $(2; 0^{n-6}, 1^6)$ in case (d);\\
\item [$\bullet$] $(3;1^{10})$ in case (e),\\
\end{inparaenum} 
and in all these cases this $H^0$ is zero, so $r$ is injective. \end{proof}

\section{Higher rank Ulrich vector bundles on $X_n$}\label{sec:high}

In this section we will construct higher rank  slope--stable  Ulrich vector bundles on $X_n$  and we will compute the dimensions of the moduli spaces of the constructed bundles. 

In the whole section  $n\geq 2$ will be an integer and $m$ will be an integer such that $\xi_{n,m}$ is very ample with $m=3$ if $n=2$ and 
$m< 2\sqrt {n}$ if $n\geq 3$ (cf. Lemma \ref {lem:num}).

 We start by defining 
\begin{eqnarray*}
L_1 & = & \begin{cases} \Big (\frac {3(m-1)}2; 2^{\frac {m^2-1}8},1^{n-\frac {m^2-1}4}, 0^{\frac {m^2-1}8}\Big),  & \text{if $m$ is odd,}\\
 \Big ( \frac 32m-1; 2^{\frac {m(m+2)}8},1^{n-\frac {m^2}4}, 0^{\frac {m(m-2)}8}\Big),  & \text{if $m$ is even,} \\
\end{cases} \\
L_0{} & = & \begin{cases} \Big (\frac {3(m-1)}2; 0^{\frac {m^2-1}8},1^{n-\frac {m^2-1}4}, 2^{\frac {m^2-1}8}\Big), & \text{if $m$ is odd,}\\
 \Big ( \frac 32m-1; 0^{\frac {m(m-2)}8}, 1^{n-\frac {m^2}4}, 2^{\frac {m(m+2)}8}\Big),  & \text{if $m$ is even.} \\
\end{cases}
\end{eqnarray*}

\begin{lemma} \label{lemma:0}
  The line bundles $L_0$ and $L_1$ are Ulrich  with respect to $\xi_{n,m}$  and satisfy
\begin{equation}\label{eq:cap}
 h^i(L_0-L_1)=h^i(L_1-L_0)=0, \; \; \mbox{for} \; \; i=0,2, 
 \end{equation}
  and
\begin{equation}\label{eq:capo} h^1(L_0-L_1)=h^1(L_1-L_0)=   \begin{cases}
      \frac {m^2-3}2, & \text{if $m$ is odd}\\
 \frac {m^2-m-2}2, & \text{if $m$ is even}. 
    \end{cases}
     \end{equation}
\end{lemma}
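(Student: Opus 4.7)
The plan is to identify $L_1$, up to a permutation of the blown--up points, with the Ulrich bundle $\L_{d,n,k,\delta}$ of Theorem \ref{thm:main1}. Setting $d=\tfrac{3(m-1)}{2}$ when $m$ is odd and $d=\tfrac{3m}{2}-1$ when $m$ is even, a direct substitution into the formulas $\delta=\tfrac{(d-m)(d-m+3)}{2}+1$ and $k=n+3m(d+1)-\tfrac{m(5m-3)}{2}-(d^2+3d+2)$ of Theorem \ref{thm:main1} recovers exactly the exponents of $2$, $1$, $0$ appearing in $L_1$. The inequalities \eqref{eq:prim}--\eqref{eq:kd} then reduce to easy algebraic manipulations of $m^2<4n$. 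Since $L_0$ is obtained from $L_1$ by permuting the multiplicities, it is Ulrich by the ``permuted multiplicities'' clause of the same theorem. The degenerate case $n=2$, $m=3$, where $k=0$, is covered by Remark \ref{rem:better}.

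Write $L_1-L_0=\sum_{i=1}^n c_i E_i$. Both $L_0$ and $L_1$ share the same $L$--coefficient $d$, so the $L$--coefficient of $L_1-L_0$ is $0$; moreover, since $L_0$ and $L_1$ share the same multiset of multiplicities, $\sum_i c_i=0$, while some $c_i$ are strictly negative (for instance $c_1=-2$ at the first point, where $L_1$ has multiplicity $2$ but $L_0$ has multiplicity $0$). As $L$ is nef on $X_n$, any effective divisor in the class $L_1-L_0$ would satisfy $D'\cdot L=0$ and hence be supported on $\bigcup_i E_i$ with non--negative coefficients; by the linear independence of $L,E_1,\dots,E_n$ in $\Pic(X_n)$ these coefficients are forced to equal the $c_i$'s, contradicting the existence of a negative $c_i$. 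This gives $h^0(L_1-L_0)=0$, and swapping $L_0,L_1$ gives $h^0(L_0-L_1)=0$. For the $h^2$ vanishings, Serre duality reduces to $h^0(K_n-(L_1-L_0))$ (and its analogue), whose $L$--coefficient equals $-3$; the nefness of $L$ once more forces the corresponding $h^0$ to vanish.

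It remains to compute $h^1$ via Riemann--Roch. Using $K_n=-3L+\sum_j E_j$ and the fact that the $E_i$'s are disjoint $(-1)$--curves, one finds $(L_1-L_0)\cdot K_n=-\sum_i c_i=0$ and $(L_1-L_0)^2=-\sum_i c_i^2$. Reading off the $c_i$'s from the explicit forms of $L_0$ and $L_1$: in the odd case only $c_i\in\{-2,0,2\}$ appear, with $(m^2-1)/8$ copies each of $\pm 2$, yielding $\sum c_i^2=m^2-1$; in the even case $c_i\in\{-2,-1,0,1,2\}$ with the counts determined by the multiplicity pattern, yielding $\sum c_i^2=m(m-1)$. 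Plugging into $\chi(L_1-L_0)=1+\tfrac{1}{2}(L_1-L_0)^2$ produces $(3-m^2)/2$ or $-(m^2-m-2)/2$ respectively, and combining with the vanishings of the previous paragraph gives the claimed values of $h^1(L_1-L_0)$. The equality $h^1(L_0-L_1)=h^1(L_1-L_0)$ is automatic, since $\chi$ is invariant under $\L\mapsto -\L$ whenever $\L\cdot K_n=0$. The main technical obstacle is the bookkeeping of the $c_i$'s in the even case, where up to five distinct values $\{-2,-1,0,1,2\}$ occur and their counts must be read off carefully from the staggered positions of the multiplicities of $L_1$ and $L_0$ in order to evaluate $\sum c_i^2$ correctly.
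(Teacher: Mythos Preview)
Your argument follows the paper's almost verbatim: you choose the same value of $d$ in each parity, invoke Theorem~\ref{thm:main1} (together with Remark~\ref{rem:better} for $n=2$) to obtain Ulrichness of $L_0$ and $L_1$, dispose of $h^0$ and $h^2$ via the nefness of $L$ and Serre duality, and then extract $h^1$ from Riemann--Roch after computing $(L_1-L_0)^2=-\sum c_i^2$ and $(L_1-L_0)\cdot K_n=-\sum c_i=0$. This is exactly the route the paper takes.

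There is one genuine subtlety in the even case that neither you nor the paper addresses. Your claimed value $\sum_i c_i^2=m(m-1)$ (equivalently, the paper's explicit form of $L_1-L_0$ with $n-\tfrac{m(m+2)}{4}$ zeros) presupposes that the block of $1$'s in $L_1$ overlaps the block of $1$'s in $L_0$, i.e.\ that $k=n-\tfrac{m^2}{4}\geq \tfrac{m}{2}$, equivalently $n\geq \tfrac{m(m+2)}{4}$. The standing hypothesis $m<2\sqrt{n}$ only yields $n>\tfrac{m^2}{4}$, which is strictly weaker. For example, with $n=5$ and $m=4$ one finds $L_1=(5;2,2,2,1,0)$, $L_0=(5;0,1,2,2,2)$, hence $L_1-L_0=(0;2,1,0,-1,-2)$, so $\sum c_i^2=10$ and $h^1=4$, whereas the stated formula gives $\tfrac{m^2-m-2}{2}=5$. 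Thus the ``careful bookkeeping'' you yourself flag as the main technical obstacle really does break down here: the even--case formula in \eqref{eq:capo} is only valid under the additional assumption $n\geq \tfrac{m(m+2)}{4}$, which you should make explicit.
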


\begin{proof}
  Let $\delta$ be as in Theorem \ref{thm:main1}.  Note  that
\begin{equation}\label{eq:panza}
n-\delta-k=\delta+\frac {3m(m-1)}2-md.
\end{equation}

We first treat the case where $m$ is odd.

Take $d=\frac {3(m-1)}2$  and assume first $n\geq 3$.  Such a value of $d$ is clearly compatible with \eqref {eq:sec}. It is also compatible with \eqref {eq:prim}. Indeed
$$
3(m-1)-(2m-3)+\sqrt {8n+1}=m+\sqrt {8n+1}>0
$$
and 
$$
2m-3+\sqrt {8n+1}-3(m-1)=\sqrt {8n+1}-m>0
$$
because   $m< 2\sqrt {n}$.  For such a value of $d$, we get 
$\delta=n-\delta-k$ from \eqref {eq:panza}. More precisely, an easy computation shows that for such a $d$ one has
$$
\delta= \frac {m^2-1}8, \quad \text{hence} \quad k=n-\frac {m^2-1}4.
$$ 
Hence we get Ulrich line bundles on $X_n$ of the form
$$
\L_{\frac {3(m-1)}2,n,\frac {m^2-1}8,n-\frac {m^2-1}4}=\Big (\frac {3(m-1)}2, 2^{\frac {m^2-1}8},1^{n-\frac {m^2-1}4}, 0^{\frac {m^2-1}8}\Big)
$$
and permutations of the multiplicities. In particular,  we see that $L_0$ and $L_1$ defined above are Ulrich.

  Up to permutations of multiplicities,  one has
$$
L_1-L_0=  L_0-L_1 =  \Big (0; 2^{\frac {m^2-1}8},0^{n-\frac {m^2-1}4}, (-2)^{\frac {m^2-1}8}\Big).
$$
 Hence we see that $h^0(L_1-L_0)=  h^0(L_0-L_1) =  0$ and, by Serre duality, $h^2(L_1-L_0)=h^0(K_n+L_0-L_1)=0$. 
By Riemann--Roch and an easy computation,
\[ h^1(L_1-L_0)=-\chi(L_1-L_0)=\frac {m^2-3}2.\]
The cohomology of $L_0-L_1$ is computed in the same way.

 In the case $n=2$, $m=3$, the line bundles $L_0$ and $L_1$ are Ulrich (see Remark \ref {rem:better}) and \eqref {eq:cap} and \eqref {eq:capo} are verified as before.

We next treat the case where $m$ is even.

Take  $d=\frac 32 m-1$.  As in the odd case, such a value of $d$ is easily seen to be compatible with \eqref {eq:prim} and with \eqref {eq:sec}  (for verifying \eqref {eq:sec} one needs $m<2\sqrt n$).  For this value of $d$ one easily computes
$$
\delta= \frac {m(m+2)}8,  \quad k=n-\frac {m^2}4 \quad \text{and}\quad n-\delta-k=\frac {m(m-2)}8.
$$
Hence we get Ulrich line bundles of the form
$$
\L_{\frac 32m-1,n, \frac {m(m+2)}8, n-\frac {m^2}4}=\Big ( \frac 32m-1, 2^{\frac {m(m+2)}8},1^{n-\frac {m^2}4}, 0^{\frac {m(m-2)}8}\Big)
$$
and permutations of the multiplicities.
In particular,  $L_0$ and $L_1$ defined above are Ulrich.

One has
$$
L_1-L_0=\Big (0; 2^{\frac {m(m-2)}8},1^{\frac m2},
0^{n-\frac {m(m+2)}4},(-1)^{\frac m2},
(-2)^{\frac {m(m-2)}8}\Big).
$$
 Again we see that $h^0(L_1-L_0)= h^2(L_1-L_0)=0$. The computation of $h^1(L_1-L_0)$ then again follows by Riemann-Roch. The cohomology of $L_0-L_1$ is computed in the same way.
\end{proof}

For simplicity we will set 
$$
h:=h^1(L_0-L_1)=h^1(L_1-L_0)=
\begin{cases} \frac {m^2-3}2 \quad \text{if $m$ is odd}\\
 \frac {m^2-m-2}2, \quad \text{if $m$ is even}. \\
\end{cases}
$$

 As  $n \geq 2$ then  $m \geq 3$, whence $h \geq 3$.

To construct higher rank Ulrich bundles on $X_n$ we proceed as follows. 

Since $\Ext^1(L_0,L_1)\cong H^1(L_1-L_0) \cong \CC^h$, we have a non--split extension
\begin{equation}
    \label{eq:1r1}
\xymatrix{ 0 \ar[r] & \E_1:=L_1 \ar[r] & \E_2 \ar[r] & L_0 \ar[r] & 0,}
  \end{equation}
  where $\E_2$ is a rank--two vector bundle, necessarily Ulrich, as $L_0$ and $L_1$ are. We proceed taking extensions
 \[
\xymatrix{ 0 \ar[r] & \E_2 \ar[r] & \E_3 \ar[r] & L_1 \ar[r] & 0,}
  \] 
\[
\xymatrix{ 0 \ar[r] & \E_3 \ar[r] & \E_4 \ar[r] & L_0 \ar[r] & 0,}
  \]
  and so on; that is, defining
  \begin{equation}
    \label{eq:jr}
    \epsilon_r=
    \begin{cases}
      0, & \mbox{if $r$ is even}, \\
      1, & \mbox{if $r$ is odd},
    \end{cases}
  \end{equation}
  we take successive extensions $[\E_{r+1}] \in \Ext^1(L_{\epsilon_{r+1}},\E_r)$ for all $r \geq 1$:
  \begin{equation}
    \label{eq:1}
    \xymatrix{ 0 \ar[r] & \E_r \ar[r] & \E_{r+1} \ar[r] & L_{\epsilon_{r+1}} \ar[r] & 0.}
 \end{equation}
  A priori we do not know that we can always take {\it non--split} such extensions; this we will prove in a moment. In any case, all $\E_{r}$ are Ulrich vector bundles of rank $r$, as extensions of Ulrich bundles are again Ulrich.

  \begin{lemma} \label{lemma:1}
    Let $L$ be $L_0$ or $L_1$. Then, for all $r \geq 1$ we have
    \begin{itemize}
    \item[(i)] $h^2(\E_r \* L^*)=0$,
      \item[(ii)] $h^2(\E_r^* \* L)=0$,
      \item[(iii)] $h^1(\E_r \* L_{\epsilon_{r+1}}^*)\geq h \geq 3$.
      \end{itemize}
  \end{lemma}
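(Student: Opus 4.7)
The plan is to establish (i) and (ii) by induction on $r$, and then to deduce (iii) from (i). For the base case $r=1$, each of $\E_1 \otimes L^* = L_1 - L$ and $\E_1^* \otimes L = L - L_1$ is either $\O_{X_n}$ (when $L=L_1$), whose $h^2$ vanishes by rationality of $X_n$ (equivalently, because $K_n$ is not effective), or $\pm(L_0 - L_1)$ (when $L=L_0$), whose $h^2$ vanishes by \eqref{eq:cap} in Lemma \ref{lemma:0}.

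For the inductive step of (i), I would tensor the defining sequence \eqref{eq:1} with $L^*$ and read off the segment
\[H^2(\E_r \otimes L^*) \longrightarrow H^2(\E_{r+1} \otimes L^*) \longrightarrow H^2(L_{\epsilon_{r+1}} - L)\]
of the long exact cohomology sequence. The left end vanishes by induction, while the right end is $H^2$ of either $\O_{X_n}$ or $\pm(L_0 - L_1)$, so it vanishes exactly as in the base case. For (ii), since every term of \eqref{eq:1} is locally free, dualizing preserves exactness and yields $0 \to L_{\epsilon_{r+1}}^* \to \E_{r+1}^* \to \E_r^* \to 0$; tensoring with $L$ and running the analogous cohomological argument closes the induction.

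For (iii), the case $r=1$ is immediate from \eqref{eq:capo}, since $\E_1 \otimes L_{\epsilon_2}^* = L_1 - L_0$. For $r \geq 2$, I would tensor the sequence $0 \to \E_{r-1} \to \E_r \to L_{\epsilon_r} \to 0$ with $L_{\epsilon_{r+1}}^*$. Because $\epsilon_r \neq \epsilon_{r+1}$ by \eqref{eq:jr}, the quotient is $L_{\epsilon_r} - L_{\epsilon_{r+1}} = \pm(L_0 - L_1)$, whose $h^1$ equals $h \geq 3$ by \eqref{eq:capo}. The relevant portion of the long exact sequence,
\[H^1(\E_r \otimes L_{\epsilon_{r+1}}^*) \longrightarrow H^1(L_{\epsilon_r} - L_{\epsilon_{r+1}}) \longrightarrow H^2(\E_{r-1} \otimes L_{\epsilon_{r+1}}^*),\]
combined with the vanishing of the rightmost term by part (i) applied to $\E_{r-1}$ with $L = L_{\epsilon_{r+1}}$, gives surjectivity of the first arrow, whence $h^1(\E_r \otimes L_{\epsilon_{r+1}}^*) \geq h \geq 3$.

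I do not anticipate any genuine obstacle: the entire lemma reduces mechanically to Lemma \ref{lemma:0} plus a short diagram chase. The only point of care is the ordering, since (iii) at level $r$ invokes (i) at level $r-1$, so the induction for (i) must be completed for all $r$ before attacking (iii).
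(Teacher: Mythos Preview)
Your proposal is correct and follows essentially the same approach as the paper's proof: induction on $r$ for (i) and (ii) using the exact sequence \eqref{eq:1} and its dual tensored with $L^*$ or $L$, reducing to Lemma \ref{lemma:0}; and for (iii), tensoring the extension sequence by $L_{\epsilon_{r+1}}^*$ and using (i) to kill the $H^2$ term. The only cosmetic difference is an index shift in (iii) (you work with the sequence defining $\E_r$ for $r\geq 2$, the paper with the one defining $\E_{r+1}$ for $r\geq 1$), which is the same argument.
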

  
  \begin{proof}
    We prove (i)-(ii) by induction on $r$. Regarding (i), it holds for
    $r=1$ since $\E_1=L_1$, by Lemma \ref{lemma:0}. Assuming it holds for $r$, we have, by tensoring \eqref{eq:1} by $L^*$, that
    \[ h^2(\E_{r+1} \* L^*) \leq h^2(\E_r \* L^*)+h^2(L_{\epsilon_{r+1}} \* L^*)=0,\]
    by the induction hypothesis and Lemma \ref{lemma:0}, since $L_{\epsilon_{r+1}} \* L^*$ equals one of $\O_X$, $L_0-L_1$ or $L_1-L_0$.

    A similar reasoning, tensoring the dual of \eqref{eq:1} by $L$, proves (ii).

    To prove (iii), first note that it holds for $r=1$, as $h^1(\E_1 \* L_{\epsilon_{2}}^*)=h^1(L_1-L_0)= h$ by Lemma \ref{lemma:0}. Then, for any $r \geq 1$,
tensor \eqref{eq:1} by $L_{\epsilon_{r+2}}^*$. Using that $h^2(\E_r\* L_{\epsilon_{r+2}}^*)=0$ by (i), we see that 
$h^1(\E_{r+1} \* L_{\epsilon_{r+2}}^*)\geq h^1(L_{\epsilon_{r+1}}\* L_{\epsilon_{r+2}}^*)=h  \geqslant 3 $, by Lemma \ref{lemma:0}.
 \end{proof}

By (iii) of the last lemma, we have that $\dim (\Ext^1(L_{\epsilon_{r+1}},\E_r))=h^1(\E_r \* L_{\epsilon_{r+1}}^*)>0$ for all $r \geq 1$, which means that we can always pick {\it non--split} extensions of the form \eqref{eq:1}. We will henceforth do so.

  \begin{lemma} \label{lemma:2}
    For all $r \geq 1$ we have
    \begin{itemize}
    \item[(i)] $h^1(\E_{r+1} \* L_{\epsilon_{r+1}}^*)=h^1(\E_r \* L_{\epsilon_{r+1}}^*)-1$,
    \item[(ii)] $h^1(\E_r \* L_{\epsilon_{r+1}}^*)=\lfloor \frac{r+1}{2}\rfloor(h-1) +1$,

\item[(iii)] $h^2(\E_r \* \E_r^*)=0$,
\item[(iv)] $\chi(\E_r \* L_{\epsilon_{r+1}}^*)=-\left\lfloor\frac{r+1}{2}\right\rfloor(h-1)-\epsilon_r$,
\item[(v)] $\chi(L_{\epsilon_{r}} \* \E_r^* )=-\left\lfloor\frac{r+1}{2}\right\rfloor(h-1)+\epsilon_rh$,
\item[(vi)] $\chi(\E_r \* \E_r^*)=-\frac{1}{2}\left(r^2-\epsilon_r\right)\left(h-1\right)+\epsilon_r$.
  \item[(vii)]  the slope of $\E_r$ is   $\mu(\E_r)=L_0\cdot \xi_{m,n}=L_1\cdot \xi_{m,n}$.
    \end{itemize}
\end{lemma}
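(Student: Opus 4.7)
My plan is to prove the seven items in order. The common tool is to tensor the defining sequence \eqref{eq:1} (or its dual) with an appropriate line or vector bundle, and then apply the vanishings of Lemmas \ref{lemma:0} and \ref{lemma:1} together with the rationality of $X_n$, which gives $H^i(\O_{X_n})=0$ for $i=1,2$. Item (i) is immediate: tensoring \eqref{eq:1} by $L_{\epsilon_{r+1}}^*$ produces
$$0 \longrightarrow \E_r \otimes L_{\epsilon_{r+1}}^* \longrightarrow \E_{r+1} \otimes L_{\epsilon_{r+1}}^* \longrightarrow \O_{X_n} \longrightarrow 0,$$
and the associated coboundary $\CC = H^0(\O_{X_n}) \longrightarrow H^1(\E_r \otimes L_{\epsilon_{r+1}}^*) = \Ext^1(L_{\epsilon_{r+1}},\E_r)$ sends $1$ to the extension class, which is non-zero by our non-split choice. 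Combined with $H^1(\O_{X_n})=0$, this yields (i).

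For (ii) I would induct on $r$, with base case $r=1$ given by $\E_1 = L_1$ and Lemma \ref{lemma:0}. In the inductive step I tensor the extension defining $\E_r$ from $\E_{r-1}$ by $L_{\epsilon_{r+1}}^* = L_{\epsilon_{r-1}}^*$; Lemma \ref{lemma:0} controls the cohomology of the quotient (giving $h^0=0$ and $h^1=h$), and Lemma \ref{lemma:1}(i) kills the $H^2$ term, yielding the recursion $h^1(\E_r \otimes L_{\epsilon_{r+1}}^*) = h^1(\E_{r-1} \otimes L_{\epsilon_{r+1}}^*) + h$. The right-hand side is then computed by combining (i) at step $r-1$ with the inductive hypothesis at step $r-2$, and a short case split on the parity of $r$ matches the claimed floor expression. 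For (iii), which I regard as the main obstacle, I again induct, the base $r=1$ being trivial since $\E_1 \otimes \E_1^* = \O_{X_n}$. In the inductive step I tensor \eqref{eq:1} by $\E_{r+1}^*$; Lemma \ref{lemma:1}(ii) kills $h^2$ of the last term $L_{\epsilon_{r+1}} \otimes \E_{r+1}^*$, and to kill $h^2$ of the first term $\E_r \otimes \E_{r+1}^*$ I dualize \eqref{eq:1} and tensor with $\E_r$, reducing to $h^2(\E_r \otimes L_{\epsilon_{r+1}}^*)=0$ from Lemma \ref{lemma:1}(i) and $h^2(\E_r \otimes \E_r^*)=0$ from the inductive hypothesis.

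Items (iv)--(vi) are then pure bookkeeping. Setting $a_r = \chi(\E_r \otimes L_{\epsilon_{r+1}}^*)$, $c_r = \chi(L_{\epsilon_r} \otimes \E_r^*)$ and $e_r = \chi(\E_r \otimes \E_r^*)$, additivity of $\chi$ in \eqref{eq:1} (respectively in its dual, and in its tensor with $\E_{r+1}^*$), together with the $\chi$--values of $\O_{X_n}$ and of $L_0-L_1$, $L_1-L_0$ supplied by Lemma \ref{lemma:0}, yields the two-term recursions $a_r = a_{r-2}-(h-1)$, $c_{r+1} = c_{r-1}-(h-1)$ and $e_{r+1} = e_r + a_r + c_{r+1}$. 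Checking the closed forms against the base cases $r=1,2$ and splitting on the parity of $r$ settles (iv)--(vi). Finally (vii) is immediate: iterating \eqref{eq:1} gives $c_1(\E_r) = \sum_{j=1}^r L_{\epsilon_j}$, and condition (i) of Proposition \ref{lem:ulr1} applied to both $L_0$ and $L_1$ forces $L_0 \cdot \xi_{n,m} = L_1 \cdot \xi_{n,m}$, whence $\mu(\E_r) = L_0 \cdot \xi_{n,m}$.
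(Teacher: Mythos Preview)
Your proof is correct and follows essentially the same route as the paper's: the same tensored and dualized copies of \eqref{eq:1}, the same inductions, and the same appeals to Lemmas \ref{lemma:0} and \ref{lemma:1} together with $h^1(\O_{X_n})=h^2(\O_{X_n})=0$. The only cosmetic difference is your compact notation $a_r,c_r,e_r$ for the Euler characteristics in (iv)--(vi), which packages the paper's recursions more succinctly.
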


  \begin{proof}
    (i) Since $\Ext^1(L_{\epsilon_{r+1}},\E_r)\cong H^1(\E_r \* L_{\epsilon_{r+1}}^*)$ and the sequence \eqref{eq:1} is constructed by taking a non--zero element therein, the coboundary map $H^0(\O_X) \to H^1(\E_r \* L_{\epsilon_{r+1}}^*)$
of \eqref{eq:1} tensored by $L_{\epsilon_{r+1}}^*$,  i.e., 
\begin{equation}
    \label{eq:dag}
    \xymatrix{ 0 \ar[r] & \E_r\*L_{\epsilon_{r+1}}^* \ar[r] & \E_{r+1}\*L_{\epsilon_{r+1}}^* \ar[r] & \O_{X_n} \ar[r] & 0,}
 \end{equation}
 is non--zero. Thus, (i) follows from the cohomology of \eqref{eq:dag}.
 
(ii) We use induction on $r$. 
    For $r=1$, the right hand side of the formula yields $h$, whereas the left hand side equals $h^1(\E_1 \* L_0^*)=h^1(L_1-L_0)$; thus the formula is correct by Lemma \ref{lemma:0}.

    Assume now that the formula holds for $r$. Tensoring \eqref{eq:1} by $L_{\epsilon_{r+2}}^*$, we obtain
\begin{equation}
    \label{eq:dagdag}
    \xymatrix{ 0 \ar[r] & \E_r\* L_{\epsilon_{r+2}}^* \ar[r] & \E_{r+1}\* L_{\epsilon_{r+2}}^* \ar[r] & L_{\epsilon_{r+1}}\* L_{\epsilon_{r+2}}^* \ar[r] & 0.}
 \end{equation}
 We have $h^0(L_{\epsilon_{r+1}}\* L_{\epsilon_{r+2}}^*)=0$ and  $h^1(L_{\epsilon_{r+1}}\* L_{\epsilon_{r+2}}^*)=h$ by Lemma \ref{lemma:0}, and $h^2(\E_r\* L_{\epsilon_{r+2}}^*)=0$ by Lemma \ref{lemma:1}. Thus,
 \[ h^1(\E_{r+1} \* L_{\epsilon_{r+2}}^*)=h+h^1(\E_r\* L_{\epsilon_{r+2}}^*)=h+h^1(\E_r\* L_{\epsilon_{r}}^*)
 \]
 Using (i) and the induction hypothesis, this equals
 \[ h+\left(h^1(\E_{r-1} \* L_{\epsilon_{r}}^*)-1\right)= h+\left(\left\lfloor \frac{(r-1)+1}{2}\right\rfloor(h-1) +1\right)-1 = \left \lfloor \frac{(r+1)+1}{2}\right\rfloor(h-1) +1,\]
 showing that the formula holds for $r+1$.

(iii)  We again use induction on $r$. For $r=1$ (iii) says that $h^2(L_1 -L_1)=h^2(\O_{X_n})=0$, which is correct. Assume now that (iii) holds for $r$. The cohomology of \eqref{eq:1} tensored by $\E_{r+1}^*$ and Lemma \ref{lemma:1}(ii) yield
\begin{equation} \label{eq:zaniolo}
 h^2( \E_{r+1} \* \E_{r+1}^*) \leq h^2( \E_{r} \* \E_{r+1}^*) + h^2( L_{\epsilon_{r+1}} \* \E_{r+1}^*) =h^2( \E_{r} \* \E_{r+1}^*).
\end{equation}
The cohomology of the dual of \eqref{eq:1} tensored by $\E_{r}$ and Lemma \ref{lemma:1}(i) yield
\begin{equation} \label{eq:pogba}
h^2( \E_{r} \* \E_{r+1}^*) \leq h^2(\E_r \* L_{\epsilon_{r+1}}^*)+h^2(\E_{r} \* \E_{r}^*)= h^2(\E_{r} \* \E_{r}^*).
\end{equation}
Now \eqref{eq:zaniolo}--\eqref{eq:pogba} and the induction hypothesis yield
$h^2( \E_{r+1} \* \E_{r+1}^*)=0$, as desired.

(iv)  For $r=1$ (iv) reads 
$\chi(L_1 -L_0)=-h$, which is correct by Lemma \ref{lemma:0}. For $r=2$ (iv) reads  $\chi(\E_2 \* L_1^*)=-h+1$; using sequence \eqref{eq:dagdag} with $r=1$, one computes $\chi(\E_2 \* L_1^*)=\chi(\O_{X_n})+\chi(L_0-L_1)=1-h$, by Lemma \ref{lemma:0}, so again the formula is correct.

Assume now that the formula holds up to a certain $r\geq 2$.  From  \eqref{eq:dagdag} and Lemma \ref{lemma:0} we find
\begin{equation} \label{eq:maz1}
  \chi(\E_{r+1}\* L_{\epsilon_{r+2}}^*) = \chi(\E_r\* L_{\epsilon_{r+2}}^*)+ \chi(L_{\epsilon_{r+1}}\* L_{\epsilon_{r+2}}^*)= \chi(\E_r\* L_{\epsilon_{r}}^*)-h.
  \end{equation}
   Then  \eqref{eq:dag} (with $r$ replaced by $r-1$) yields
  \begin{equation}
    \label{eq:maz2}
    \chi(\E_r\* L_{\epsilon_{r}}^*)=\chi(\E_{r-1}\* L_{\epsilon_{r}}^*)+\chi(\O_{X_n})= \chi(\E_{r-1}\* L_{\epsilon_{r}}^*)+1.
  \end{equation}
  Inserting into \eqref{eq:maz1} and using the induction hypothesis, we get
  \begin{eqnarray*}
    \chi(\E_{r+1}\* L_{\epsilon_{r+2}}^*) &=& \chi(\E_{r-1}\* L_{\epsilon_{r}}^*)+1-h \\
    & = & -(h-1)\left\lfloor\frac{(r-1)+1}{2}\right\rfloor-\epsilon_{r-1}+1-h \\
                                          & = & -(h-1)\left(\left\lfloor\frac{r}{2}\right\rfloor+1\right) -\epsilon_{r-1}\\
 & = & -(h-1)\left\lfloor\frac{(r+1)+1}{2}\right\rfloor -\epsilon_{r+1},   
\end{eqnarray*}
proving that the formula holds also for $r+1$.

(v)  For $r=1$ (v) reads 
$\chi(L_1 -L_1)=1$, which is correct. For $r=2$ (v) reads  $\chi(L_0\* \E_2^*)=-h+1$; from the dual of sequence \eqref{eq:1r1} tensored by $L_0$, one computes $\chi(L_0\* \E_2^*)=\chi(\O_{X_n})+\chi(L_0-L_1)=1-h$, by Lemma \ref{lemma:0}, so again the formula is correct.

Assume now that the formula holds up to a certain $r\geq 2$.  From  the dual of sequence \eqref{eq:1} tensored by $L_{\epsilon_{r+1}}$
 we find
\begin{eqnarray} \label{eq:maz3}
  \chi(L_{\epsilon_{r+1}} \* \E_{r+1}^* ) & = &   \chi(L_{\epsilon_{r+1}} \* L_{\epsilon_{r+1}}^*)+ \chi(L_{\epsilon_{r+1}}\* \E_{r}^*) \\
  \nonumber & = &  \chi(\O_{X_n})+\chi(L_{\epsilon_{r+1}}\* \E_{r}^*) 
 =  1+\chi(L_{\epsilon_{r+1}}\* \E_{r}^*).
  \end{eqnarray}
   The  dual of sequence \eqref{eq:1} with $r$ replaced by $r-1$ tensored by $L_{\epsilon_{r+1}}$, together with Lemma \ref{lemma:0}, yields
  \begin{equation}
    \label{eq:maz4}
\chi(L_{\epsilon_{r+1}}\* \E_{r}^*)=\chi(L_{\epsilon_{r+1}} \* L_{\epsilon_{r}}^*)+ \chi(L_{\epsilon_{r+1}}\* \E_{r-1}^*)= -h+\chi(L_{\epsilon_{r-1}}\* \E_{r-1}^*).
\end{equation}
  Inserting into \eqref{eq:maz3} and using the induction hypothesis, we get
  \begin{eqnarray*}
    \chi(L_{\epsilon_{r+1}} \* \E_{r+1}^* ) &=& 1-h+\chi(L_{\epsilon_{r-1}}\* \E_{r-1}^*) \\
                                            & = &
1-h-(h-1)\left\lfloor\frac{(r-1)+1}{2}\right\rfloor+\epsilon_{r-1}h \\
                                          & = & -(h-1)\left(\left\lfloor\frac{r}{2}\right\rfloor+1\right) +\epsilon_{r-1}h\\
 & = & -(h-1)\left\lfloor\frac{(r+1)+1}{2}\right\rfloor +\epsilon_{r+1}h,   
\end{eqnarray*}
proving that the formula holds also for $r+1$.

(vi) We check the given formula for $r=1,2$.

We have $\chi(\E_1 \* \E_1^*)=\chi(L_1-L_1)=\chi(\O_X)=1$, which fits with the given formula for $r=1$. 

From \eqref{eq:1r1} tensored by $\E_2^*$ we get
\begin{equation}
  \label{eq:cr1}
  \chi(\E_2 \* \E_2^*)=\chi(L_1 \* \E_2^*)+\chi(L_0 \* \E_2^*)\stackrel{(v)}{=} \chi(L_1 \* \E_2^*)-(h-1).
\end{equation}
From the dual of  \eqref{eq:1r1}  tensored by $L_1$ and Lemma \ref{lemma:0} we get
\begin{equation}
  \label{eq:cr2}
  \chi(L_1 \* \E_2^*)=\chi(L_1 -L_1)+\chi(L_1 -L_0)=\chi(\O_{X_n})-h=1-h.
\end{equation}
Combining \eqref{eq:cr1} and \eqref{eq:cr2}, we get $\chi(\E_2 \* \E_2^*)=-2(h-1)$, which again fits with the given formula for $r=2$.

Assume now that the given formula is valid up to a certain $r$. From \eqref{eq:1} tensored by $\E_{r+1}^*$ and successively the dual of  \eqref{eq:1}  tensored by $\E_r$ we get
\begin{eqnarray*}
  \chi(\E_{r+1} \* \E_{r+1}^*) & = & \chi(\E_{r} \* \E_{r+1}^*)+ \chi(L_{\epsilon_{r+1}} \* \E_{r+1}^*) \\
  & = & \chi(\E_{r} \* \E_{r}^*) +\chi(\E_{r} \* L_{\epsilon_{r+1}}^*)+ \chi(L_{\epsilon_{r+1}} \* \E_{r+1}^*).
\end{eqnarray*}
Using the induction hypothesis, together with (iv) and (v) (with $r$ substituted by $r+1$), the right hand side can be written as
\begin{eqnarray*}
\left[-\frac{1}{2}\left(r^2-\epsilon_r\right)\left(h-1\right)+\epsilon_r\right]
+\left[-(h-1)\left\lfloor\frac{r+1}{2}\right\rfloor-\epsilon_r\right] +
\left[-(h-1)\left\lfloor\frac{r+2}{2}\right\rfloor+\epsilon_{r+1}h\right]
\end{eqnarray*}
An easy computation shows that this equals
\[ -\frac{1}{2}\left((r+1)^2-\epsilon_{r+1}\right)\left(h-1\right)+\epsilon_{r+1},\]
finishing the inductive step.

(vii) This is easily checked by induction again.
\end{proof}

We now define, for each $r \geq 1$, the scheme
$\UU(r)$ to be the modular family of the vector bundles $\E_r$ defined above.
For $r \geq 2$, the scheme $\UU(r)$ contains a subscheme $\UU(r)^{\rm ext}$ parametrizing bundles $\F_r$ that are non--split extensions of the form
\begin{equation} \label{eq:estensione}
  \xymatrix{
0 \ar[r] & \F_{r-1} \ar[r] & \F_r \ar[r] & L_{\epsilon_r} \ar[r] & 0, 
  }
\end{equation}
with $[\F_{r-1}] \in \UU(r-1)$.

\begin{lemma} \label{lemma:genUr}
  Let $\F_r$ be a general member of  $\UU(r)$. Then $\F_r$ is
  Ulrich of rank $r$ with slope $\mu:=L_0\cdot \xi_{m,n}=L_1\cdot \xi_{m,n}$.

  Moreover,
  \begin{itemize}
  \item[(i)] $\chi(\F_r \* \F_r^*)=-\frac{1}{2}\left(r^2-\epsilon_r\right)\left(h-1\right)+\epsilon_r$,
  \item[(ii)] $h^2(\F_r \* \F_r^*)=0$,
    \item[(iii)] $h^1(\F_r \* L_{\epsilon_{r+1}}^*) \leq \lfloor \frac{r+1}{2}\rfloor(h-1) +1$.
  \end{itemize}
\end{lemma}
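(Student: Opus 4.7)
The plan is to read each claim off of the standard \emph{semicontinuity and deformation--invariance} principles, using that $\E_r$ itself is a distinguished member of $\UU(r)^{\mathrm{ext}}\subseteq \UU(r)$ for which all the relevant invariants were computed in Lemma \ref{lemma:2}. Concretely, I would let $\F_r$ be a general member of $\UU(r)$ and view it as a flat deformation of $\E_r$, and then:

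First, dispose of the easy assertions. The rank of a flat family of torsion-free sheaves is locally constant, and Chern classes (hence slopes) are likewise constant in flat families, so $\mathrm{rk}(\F_r)=\mathrm{rk}(\E_r)=r$ and $\mu(\F_r)=\mu(\E_r)=L_0\cdot \xi_{n,m}=L_1\cdot \xi_{n,m}$ by Lemma \ref{lemma:2}(vii). The Ulrich property is defined by the vanishings $h^i(\F_r(-p\xi_{n,m}))=0$ for $i\geq 0$ and $1\leq p\leq 2$; each of these is an open condition on $\UU(r)$ by the semicontinuity theorem, and $\E_r$ satisfies all of them (being built as iterated extensions of the Ulrich line bundles $L_0,L_1$), so the Ulrich locus is a non-empty open subset of $\UU(r)$, which a general member therefore belongs to.

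Next, assertion (i) is deformation invariance of the Euler characteristic: $\chi(\F_r\otimes \F_r^*)$ depends only on the Chern classes of $\F_r$, which coincide with those of $\E_r$ in the flat family. Hence
\[ \chi(\F_r\otimes \F_r^*)=\chi(\E_r\otimes \E_r^*)=-\tfrac{1}{2}\bigl(r^2-\epsilon_r\bigr)(h-1)+\epsilon_r \]
by Lemma \ref{lemma:2}(vi). For (ii) and (iii), apply upper semicontinuity to the flat families $\{\F\otimes \F^*\}_{[\F]\in \UU(r)}$ and $\{\F\otimes L_{\epsilon_{r+1}}^*\}_{[\F]\in \UU(r)}$ respectively. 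Since $[\E_r]\in \UU(r)$, semicontinuity gives, for $[\F_r]$ in a suitable open neighbourhood of $[\E_r]$,
\[ h^2(\F_r\otimes \F_r^*)\leq h^2(\E_r\otimes \E_r^*)=0\qquad\text{(by Lemma \ref{lemma:2}(iii))}, \]
which forces equality to $0$, proving (ii); and
\[ h^1(\F_r\otimes L_{\epsilon_{r+1}}^*)\leq h^1(\E_r\otimes L_{\epsilon_{r+1}}^*)=\left\lfloor\tfrac{r+1}{2}\right\rfloor(h-1)+1 \]
by Lemma \ref{lemma:2}(ii), yielding (iii).

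There is no real obstacle here beyond making sure that $\UU(r)$ really does carry a flat universal family containing $\E_r$ as a point, which is essentially the definition of the modular family. The whole content of the lemma is that the computations performed on the special bundle $\E_r$ in Lemma \ref{lemma:2} propagate to a general member of $\UU(r)$: equalities for topological/numerical invariants (rank, slope, $\chi$) and upper bounds for pointwise cohomological dimensions ($h^1,h^2$).
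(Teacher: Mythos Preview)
Your proof is correct and follows essentially the same approach as the paper's: Ulrichness, rank, and slope are handled by openness and constancy in the family, (i) by deformation invariance of $\chi$ via Lemma \ref{lemma:2}(vi), and (ii)--(iii) by upper semicontinuity applied at the special point $\E_r$ together with Lemma \ref{lemma:2}(iii) and (ii). The paper's argument is just a terser version of what you wrote.
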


\begin{proof}
  Ulrichness is an open property in the family, and the rank and slope are  constant,  so the general member of $\UU(r)$ is Ulrich
  of rank $r$ and slope $\mu$ as each $\E_r$ constructed above is (cf. Lemma \ref{lemma:2}(vii)).

  Properties (ii) and (iii) follows by specializing $\F_r$ to an $\E_r$ constructed above, and using semicontinuity and Lemma \ref{lemma:2}(iii) and (ii), respectively. Property (i) follows by Lemma \ref{lemma:2}(vi), since the given $\chi$ depends only on the Chern classes of the two factors and of $X_n$, which are constant in the family $\UU(r)$. 
\end{proof}

We wish to prove that the general member of $\UU(r)$ is slope--stable. To this end we will need a couple of auxiliary results.

\begin{lemma} \label{lemma:uniquedest}
  Let $r \geq 2$ and assume that $[\F_r] \in \UU(r)^{\rm ext}$ sits in a non--split sequence like \eqref{eq:estensione}
with $[\F_{r-1}] \in \UU(r-1)$ being slope--stable. Then 
  \begin{itemize}
  \item[(i)] $\F_r$ is simple (that is, $h^0(\F_r \* \F_r^*)=0$);
    \item[(ii)] if $\G$ is a destabilizing subsheaf of $\F_r$, then $\G^{*} \cong  \F_{r-1}^*$ and $(\F_r/\G)^* \cong L_{\epsilon_r}^*$; if furthermore $\F_r/\G$ is torsion--free, then  $\G \cong  \F_{r-1}$ and $\F_r/\G \cong L_{\epsilon_r}$. 
  \end{itemize}
\end{lemma}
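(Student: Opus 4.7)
The plan is to reduce (i) to suitable vanishings of Hom groups and (ii) to the analysis of the saturation of an arbitrary destabilizer, in both cases exploiting the non-splitness of \eqref{eq:estensione} and the slope-stability of $\F_{r-1}$.

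For (i), I would apply $\Hom(-,L_{\epsilon_r})$ and $\Hom(-,\F_{r-1})$ to \eqref{eq:estensione}, obtaining the exact sequences
\[ 0\to \Hom(L_{\epsilon_r},L_{\epsilon_r})\to \Hom(\F_r,L_{\epsilon_r})\to \Hom(\F_{r-1},L_{\epsilon_r}) \]
and
\[ 0\to \Hom(L_{\epsilon_r},\F_{r-1})\to \Hom(\F_r,\F_{r-1})\to \Hom(\F_{r-1},\F_{r-1}). \]
Both $\Hom(\F_{r-1},L_{\epsilon_r})=0$ and $\Hom(L_{\epsilon_r},\F_{r-1})=0$ follow, when $r\geq 3$, from slope-stability of $\F_{r-1}$ (rank $\geq 2$ and slope $\mu=\mu(L_{\epsilon_r})$): a nonzero $\F_{r-1}\to L_{\epsilon_r}$ would produce a torsion-free rank-$1$ quotient of $\F_{r-1}$ of slope $>\mu$ (by stability) sitting inside $L_{\epsilon_r}$ (slope $\leq\mu$); a nonzero $L_{\epsilon_r}\to\F_{r-1}$ is forced to be injective (as the only rank-$1$ torsion-free quotient of $L_{\epsilon_r}$ is itself), and its image, of slope $\mu$, has saturation in $\F_{r-1}$ of slope $\geq\mu$, contradicting stability. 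For $r=2$, $\F_{r-1}=L_1$ and the two vanishings reduce to $h^0(L_0-L_1)=h^0(L_1-L_0)=0$, recorded in Lemma \ref{lemma:0}. From the first displayed sequence, $\Hom(\F_r,L_{\epsilon_r})=\CC$, spanned by the projection of \eqref{eq:estensione}; from the second, $\Hom(\F_r,\F_{r-1})\hookrightarrow\Hom(\F_{r-1},\F_{r-1})=\CC$, but any nonzero preimage would restrict to a nonzero scalar on $\F_{r-1}$ and split \eqref{eq:estensione}, so $\Hom(\F_r,\F_{r-1})=0$. Applying $\Hom(\F_r,-)$ to \eqref{eq:estensione} now yields $\Hom(\F_r,\F_r)\hookrightarrow\Hom(\F_r,L_{\epsilon_r})=\CC$, and since $\id_{\F_r}$ is a nonzero element, $\End(\F_r)=\CC$, i.e., $\F_r$ is simple.

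For (ii), I would take a destabilizer $\G\subsetneq\F_r$ and pass to its saturation $\tilde\G$ in $\F_r$; then $\tilde\G$ is also destabilizing, and since $\tilde\G/\G$ is torsion we have $\G^*\cong\tilde\G^*$ and $(\F_r/\G)^*\cong(\F_r/\tilde\G)^*$, reducing the problem to proving $\tilde\G=\F_{r-1}$ (whence $\F_r/\tilde\G=L_{\epsilon_r}$). I would examine the composition $\tilde\G\hookrightarrow\F_r\twoheadrightarrow L_{\epsilon_r}$. If it vanishes, $\tilde\G\subseteq\F_{r-1}$; since saturatedness in $\F_r$ propagates to saturatedness in $\F_{r-1}$, slope-stability of $\F_{r-1}$ together with $\mu(\tilde\G)\geq\mu$ forces $\tilde\G=\F_{r-1}$. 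If the composition is nonzero, let $s=\rank\tilde\G$; for $2\leq s\leq r-1$, setting $K=\tilde\G\cap\F_{r-1}$ of rank $s-1$ and bounding $\mu(\tilde\G)$ in terms of $\mu(K)<\mu$ (by stability of $\F_{r-1}$, since $K$ is a proper subsheaf of positive rank) and $\mu(\im(\tilde\G\to L_{\epsilon_r}))\leq\mu$ yields $\mu(\tilde\G)<\mu$, contradicting destabilization. For $s=1$, the slope analysis forces $\tilde\G\cong L_{\epsilon_r}\otimes I_Z$ for some $0$-dimensional $Z$; reflexivity of $\F_r$ identifies $\Hom(\tilde\G,\F_r)=\Hom(\tilde\G^{**},\F_r)=\Hom(L_{\epsilon_r},\F_r)$, and tensoring \eqref{eq:estensione} by $L_{\epsilon_r}^*$ and passing to cohomology shows the latter vanishes (the connecting map $\CC\to\Ext^1(L_{\epsilon_r},\F_{r-1})$ sends $1$ to the nonzero extension class, while $\Hom(L_{\epsilon_r},\F_{r-1})=0$ by (i)), contradicting the nonzero inclusion $\tilde\G\hookrightarrow\F_r$. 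The ``furthermore'' assertion is then immediate: torsion-freeness of $\F_r/\G$ is equivalent to saturatedness of $\G$ in $\F_r$, whence $\G=\tilde\G=\F_{r-1}$.

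The main obstacle I anticipate is the clean handling of the rank-$1$ subcase in (ii): it requires combining slope-stability of $\F_{r-1}$ (used to identify $\tilde\G$ abstractly as $L_{\epsilon_r}\otimes I_Z$), reflexivity of $\F_r$ (to lift the inclusion to the double dual $L_{\epsilon_r}$), and non-splitness of \eqref{eq:estensione} (to secure the vanishing $\Hom(L_{\epsilon_r},\F_r)=0$). Once this piece is in place, the rest is standard bookkeeping with slopes and saturations.
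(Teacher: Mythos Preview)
Your approach is sound and in several respects cleaner than the paper's, but the order and method differ. The paper proves (ii) first, via a snake--lemma diagram comparing $\G$ directly to the extension \eqref{eq:estensione} (without saturating) and using $\Shext^i(\K',\O_{X_n})=0$ for $i\leq 1$ to obtain the dual isomorphisms; it then deduces (i) from (ii) by the eigenvalue trick (a nonsimple bundle admits an endomorphism of nonmaximal rank, whose kernel or image must destabilize, and (ii) then forces a splitting). Your route instead establishes (i) first by direct $\Hom$--algebra, computing $\Hom(\F_r,\F_{r-1})=0$ and $\Hom(\F_r,L_{\epsilon_r})=\CC$ from non--splitness and stability, and then treats (ii) by passing to the saturation. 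Your handling of the rank--$1$ subcase in (ii) via $\Hom(L_{\epsilon_r},\F_r)=0$ is a genuine addition: the paper's slope computation for $\K$ in the case $\rk(\Q)=1$ tacitly requires $\rk(\K)\geq 1$, i.e., $\rk(\G)\geq 2$, and does not literally dispose of a rank--$1$ destabilizer mapping nontrivially to $L_{\epsilon_r}$.

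There is, however, one gap in your reduction step. From ``$\tilde\G/\G$ is torsion'' you infer $\G^*\cong\tilde\G^*$; this fails in general (take $\O(-H)\subset\O$ on any surface, with torsion quotient $\O_H$ but non--isomorphic duals). Dualizing $0\to\G\to\tilde\G\to\tilde\G/\G\to0$ only gives an injection $\tilde\G^*\hookrightarrow\G^*$ with cokernel contained in $\Shext^1(\tilde\G/\G,\O_{X_n})$, which vanishes precisely when $\tilde\G/\G$ is supported in codimension $\geq 2$. This \emph{does} hold in your situation, but only \emph{a posteriori}: once you have shown $\tilde\G=\F_{r-1}$, the inequality $\mu(\G)\geq\mu=\mu(\tilde\G)$ together with $\rk(\G)=\rk(\tilde\G)$ forces $c_1(\tilde\G/\G)\cdot\xi_{n,m}\leq 0$, whence (by ampleness of $\xi_{n,m}$) the divisorial part of $\tilde\G/\G$ is zero. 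So the fix is simply to postpone the dual identification for $\G$ until after $\tilde\G=\F_{r-1}$ is established. (Your companion claim $(\F_r/\G)^*\cong(\F_r/\tilde\G)^*$, by contrast, is fine as stated: it follows from $(\tilde\G/\G)^*=0$ and left--exactness of $\Shom(-,\O_{X_n})$ applied to $0\to\tilde\G/\G\to\F_r/\G\to\F_r/\tilde\G\to0$.)
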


\begin{proof}

  We first prove (ii).
  
Assume that $\G$ is a destabilizing subsheaf of $\F_r$, that is  $0<\rk(\G) < \rk(\F_r)=r$ and $\mu(\G) \geq \mu=\mu(\F_r)$. Define
  \[ \Q:=\im\{\G \subset \F_r \to L_{\epsilon_r}\} \; \; \mbox{and} \; \; \K:=\ker\{\G \to \Q\}.\]
  Then we may put \eqref{eq:estensione} into a commutative diagram with exact rows and columns:
  \[
    \xymatrix{ & 0 \ar[d] & 0 \ar[d] & 0 \ar[d] & \\
  0 \ar[r]   & \K \ar[d] \ar[r] & \G \ar[r] \ar[d] & \Q \ar[r] \ar[d] & 0 \\    
  0 \ar[r]   & \F_{r-1} \ar[r] \ar[d] & \F_r \ar[r] \ar[d] & L_{\epsilon_r} \ar[d] \ar[r] & 0 \\
  0 \ar[r]   & \K' \ar[r] \ar[d] & \F_r/\G \ar[r] \ar[d] & \Q' \ar[r] \ar[d] & 0 \\
  & 0  & 0  & 0  &
}
\]
defining $\K'$ and $\Q'$. We have $\rk(\Q) \leq 1$.

Assume that $\rk(\Q)=0$. Then $\Q =0$, whence $\K \cong \G$ and $\Q' \cong L_{\epsilon_r}$. Since $\mu(\K) =\mu(\G) \geq \mu=\mu(\F_{r-1})$ and $\F_{r-1}$ is slope--stable,
we must have
$\rk(\K)=\rk(\F_{r-1})=r-1$. It follows that $\rk(\K')=0$. Since
\[
  c_1(\K) =  c_1(\F_{r-1})-c_1(\K')=c_1(\F_{r-1})-D',\]
where $D'$ is an effective divisor supported on the codimension one locus of the support of $\K'$, we have 
\[ \mu \leq \mu(\K)=\frac{\left(c_1(\F_{r-1})-D'\right) \cdot \xi_{m,n}}{r-1}=
  \frac{c_1(\F_{r-1})\cdot \xi_{m,n}}{r-1}-\frac{D' \cdot \xi_{m,n}}{r-1}
= \mu-\frac{D' \cdot \xi_{m,n}}{r-1}.\]
  Hence $D'=0$, which means that $\K'$ is supported in codimension at least two.
  Thus, $\Shext^i(\K',\O_X)=0$ for $i \leq 1$, and it follows that $ \G^* \cong \K^* \cong \F_{r-1}^*$ and $(\F_r/\G)^* \cong {\Q'}^* \cong L_{\epsilon_r}^*$,  as desired. If furthermore $\F_r/\G$ is torsion--free, then we must have $\K'=0$, whence $\G \cong  \F_{r-1}$ and
$\F_r/\G \cong L_{\epsilon_r}$.

 Next we prove that $\rk(\Q)=1$ cannot happen. Indeed, if $\rk(\Q)=1$, then  $\rk(\K)=\rk(\G)-1 \leq r-2<r-1=\rk(\F_{r-1})$
and $\rk(\Q')=0$; in particular $\Q'$ is a torsion sheaf. Since
\[
  c_1(\K) =  c_1(\G)-c_1(\Q)=c_1(\G)-c_1(L_{\epsilon_r})+c_1(\Q')=c_1(\G)-c_1(L_{\epsilon_r})+D,
\]
where $D$ is an effective divisor supported on the codimension one locus of the support of $\Q'$, we have
\begin{eqnarray*}
  \mu(\K) & = & \frac{\Big(c_1(\G)-c_1(L_{\epsilon_r})+D\Big) \cdot \xi_{m,n}}{\rk(\K)} \geq \frac{\Big(c_1(\G)-c_1(L_{\epsilon_r})\Big) \cdot \xi_{m,n}}{\rk(\K)} 
\\
          & = & \frac{\mu(\G)\rk(\G)-c_1(L_{\epsilon_r})\cdot \xi_{m,n}}{\rk(\K)}
= \frac{\mu(\G)\rk(\G)-\mu}{\rk(\G)-1}
\geq
                \frac{\mu\rk(\G)-\mu}{\rk(\G)-1} =\mu
                \end{eqnarray*}
                This contradicts slope--stability of $\F_{r-1}$.

                To prove (i), assume that $\F_r$ is non--simple, that is, it admits a nontrivial endomorphism. By standard arguments, there exists a nonzero endomorphism $\varphi:\F_r \to \F_r$ dropping rank everywhere. (Take any endomorphism $\alpha$ that is not a constant times the identity, pick an eigenvalue $\lambda$ of $\alpha(x)$ for some $x \in X_n$ and set $\varphi=\alpha-\lambda \id$; then $\det(\varphi) \in H^0(\det(\F_r^*) \* \det(\F_r))=H^0(\O_{X_n})\cong \CC$ vanishes at $x$, whence  it  is identically zero.) Both $\ker(\varphi)$ and $\im (\varphi)$, being subsheaves of $\F_r$, are torsion--free, and one easily checks that at least one of them is destabiling. By part (ii), it follows that  either  $\ker(\varphi)\cong \F_{r-1}$ or $\im(\varphi)^* \cong \F_{r-1}^*$. In the first case, $\varphi$ factors through $L_{\epsilon_r}$, whence the map $\F_r \to L_{\epsilon_r}$ in \eqref{eq:estensione} splits, a contradiction. In the second case, the natural injection $\im(\varphi) \subset \im(\varphi)^{**} \cong \F_{r-1}^{**} \cong \F_{r-1}$ shows that $\varphi$ factors through $\F_{r-1}$, whence the map $\F_{r-1} \to \F_r$ in \eqref{eq:estensione} splits, again a contradiction.
\end{proof}

\begin{lemma} \label{lemma:dimU}
  Let $r \geq 2$ and assume that the general member of $\UU(r-1)$ is slope--stable. Then $\UU(r)$ is  generically  smooth of dimension $\frac{1}{2}(r^2-\epsilon_r)(h-1)+\epsilon_{r+1}$ and properly contains $\UU(r)^{\rm ext}$. 
\end{lemma}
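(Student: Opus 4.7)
The plan is to apply standard deformation theory of coherent sheaves, using the cohomological inputs already gathered in Lemmas \ref{lemma:genUr} and \ref{lemma:uniquedest}.

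First, I choose a point $[\F_r] \in \UU(r)^{\rm ext}$ sitting in a non--split extension \eqref{eq:estensione} with $[\F_{r-1}] \in \UU(r-1)$ slope--stable; such an $\F_{r-1}$ exists by the standing hypothesis. Lemma \ref{lemma:uniquedest}(i) then guarantees that $\F_r$ is simple, so $h^0(\F_r \* \F_r^*) = 1$, while Lemma \ref{lemma:genUr}(ii) gives $h^2(\F_r \* \F_r^*) = 0$. Since $h^2(\O_{X_n})=0$, the trace--free obstruction space $\Ext^2(\F_r,\F_r)_0$ coincides with $\Ext^2(\F_r,\F_r)\cong H^2(\F_r\*\F_r^*)$, which vanishes. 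Hence the modular family $\UU(r)$ is smooth at $[\F_r]$, with Zariski tangent space of dimension
\[ h^1(\F_r \* \F_r^*) \;=\; 1 - \chi(\F_r \* \F_r^*) \;=\; \tfrac{1}{2}(r^2 - \epsilon_r)(h-1) + \epsilon_{r+1}, \]
using Lemma \ref{lemma:genUr}(i) and the elementary identity $1 - \epsilon_r = \epsilon_{r+1}$. Since simplicity and the vanishing of $h^2(\F_r \* \F_r^*)$ are open conditions, the irreducible component of $\UU(r)$ containing $[\F_r]$ (which is the one containing $[\E_r]$, as $\E_r$ itself arises by this extension construction) is generically smooth of the asserted dimension.

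For the proper containment I estimate $\dim \UU(r)^{\rm ext}$ via the natural map $\pi \colon \UU(r)^{\rm ext} \to \UU(r-1)$, $[\F_r] \mapsto [\F_{r-1}]$, whose fibre over $[\F_{r-1}]$ is $\PP\big(\Ext^1(L_{\epsilon_r}, \F_{r-1})\big)$. Over the general $[\F_{r-1}]$, Lemma \ref{lemma:genUr}(iii) bounds this fibre dimension by $\lfloor r/2 \rfloor (h-1)$. Combining with the inductive formula $\dim \UU(r-1) = \tfrac{1}{2}((r-1)^2 - \epsilon_{r-1})(h-1) + \epsilon_r$ and the identity $\epsilon_{r-1} = \epsilon_{r+1}$, a short case check on the parity of $r$ (using $h \geq 3$) yields
\[ \dim \UU(r)^{\rm ext} \;<\; \tfrac{1}{2}(r^2 - \epsilon_r)(h-1) + \epsilon_{r+1} \;=\; \dim \UU(r), \]
which forces $\UU(r)^{\rm ext} \subsetneq \UU(r)$.

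The only delicate point is the bookkeeping: verifying at the single chosen point $[\F_r]$ that the hypotheses of Lemmas \ref{lemma:uniquedest} and \ref{lemma:genUr}(iii) apply simultaneously, and then checking the strict dimension inequality in each parity of $r$. Everything else is standard: generic smoothness propagates from $[\F_r]$ to a Zariski--open neighbourhood via upper semicontinuity of $h^0$ and $h^2$, and a mere upper bound on $\dim \UU(r)^{\rm ext}$ suffices for proper containment.
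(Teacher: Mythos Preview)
Your proof is correct and follows essentially the same approach as the paper: simplicity from Lemma \ref{lemma:uniquedest}(i), $h^2$-vanishing and $\chi$ from Lemma \ref{lemma:genUr}, and then the bound $\dim\UU(r)^{\rm ext}\leq\dim\UU(r-1)+\dim\PP(\Ext^1(L_{\epsilon_r},\F_{r-1}))$. One small wrinkle worth tightening: Lemma \ref{lemma:genUr}(ii) is stated only for the \emph{general} member of $\UU(r)$, so it does not directly give $h^2(\F_r\*\F_r^*)=0$ at your specifically chosen $[\F_r]\in\UU(r)^{\rm ext}$; the paper avoids this by first using openness of simplicity to pass from your $[\F_r]$ to the general member of $\UU(r)$, and only then invoking Lemma \ref{lemma:genUr}(ii) there---a cleaner order of quantifiers than verifying both conditions at a single point and propagating.
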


\begin{proof}
  The general member $\F_r$ of $\UU(r)$ satisfies $h^0(\F_r \* \F_r^*)=1$ by Lemma \ref{lemma:uniquedest}(i) and $h^2(\F_r \* \F_r^*)=0$ by Lemma \ref{lemma:genUr}(ii). Hence  one has (see, e.g., \cite[Prop. 2.10]{ch})  that  $\UU(r)$ is  generically  smooth of dimension
  \begin{eqnarray*}
    h^1(\F_r \* \F_r^*) & = & -\chi(\F_r \* \F_r^*)+h^0(\F_r \* \F_r^*)+h^2(\F_r \* \F_r^*)=-\chi(\F_r \* \F_r^*)+1 \\
                        & = & \frac{1}{2}\left(r^2-\epsilon_r\right)\left(h-1\right)-\epsilon_r+1 =\frac{1}{2}\left(r^2-\epsilon_r\right)\left(h-1\right)+\epsilon_{r+1},
  \end{eqnarray*}
where we have used Lemma \ref{lemma:genUr}(i),  as claimed.

  Similarly, being slope-stable, also the general member $\F_{r-1}$ of $\UU(r-1)$ satisfies $h^0(\F_{r-1} \* \F_{r-1}^*)=1$, so the same reasoning shows that
  \begin{equation}
    \label{eq:dimUr-1}
\dim (\UU(r-1))=    \frac{1}{2}\left((r-1)^2-\epsilon_{r-1}\right)\left(h-1\right)+\epsilon_{r}.
  \end{equation}
  Morover,
  \begin{equation}
    \label{eq:dimextv}
    \dim (\Ext^1(L_{\epsilon_r},\F_{r-1}))=h^1(\F_{r-1} \*L_{\epsilon_r}^*)\leq \left\lfloor \frac{r}{2}\right\rfloor(h-1) +1.
  \end{equation}
  by Lemma \ref{lemma:genUr}(iii). Hence
  \begin{eqnarray*}
    \dim(\UU(r)^{\rm ext}) &  \leqslant  & \dim (\UU(r-1))+\dim \PP(\Ext^1(L_{\epsilon_r},\F_{r-1})) \\
                       & \leq & \frac{1}{2}\left((r-1)^2-\epsilon_{r-1}\right)\left(h-1\right)+\epsilon_{r} +\left\lfloor \frac{r}{2}\right\rfloor(h-1) \\
                       & = & \frac{1}{2} \left(r^2-\epsilon_{r}\right)\left(h-1\right)+\epsilon_{r+1}-\left(\left\lfloor \frac{r+1}{2} \right\rfloor-\epsilon_r\right)(h-1)+\epsilon_r-\epsilon_{r+1} \\
     & = & \dim(\UU(r)) - \left(\left\lfloor \frac{r+1}{2} \right\rfloor-\epsilon_r\right)(h-1)+\epsilon_r-\epsilon_{r+1},
    \end{eqnarray*}
  and one easily sees that this is strictly less than
  $\dim(\UU(r))$, since $r \geq 2$ and $h \geq 3$.  Thus, $\UU(r)^{\rm ext}$ is properly contained in $\UU(r)$, as claimed.
\end{proof}

We can now prove slope--stability of the general member of $\UU(r)$.

\begin{proposition}
  Let $r\geq 1$. The general member of $\UU(r)$ is slope--stable.
\end{proposition}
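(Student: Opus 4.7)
The plan is to argue by induction on $r$. The base case $r=1$ holds trivially since $\E_1=L_1$ is a line bundle and hence automatically slope--stable. For the inductive step I assume the general member of $\UU(r-1)$ is slope--stable and deduce the same for $\UU(r)$.

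Starting from a slope--stable $\F_{r-1}\in\UU(r-1)$, I would form a non--split extension $\E_r\in\UU(r)^{\rm ext}$ as in \eqref{eq:estensione}. By Lemma \ref{lemma:uniquedest}(i), $\E_r$ is simple; combined with $h^2(\E_r\otimes\E_r^*)=0$ from Lemma \ref{lemma:genUr}(ii), this shows that $\UU(r)$ is smooth at $[\E_r]$ of the dimension computed in Lemma \ref{lemma:dimU}. Since that lemma also yields $\dim\UU(r)^{\rm ext}<\dim\UU(r)$, the point $[\E_r]$ admits a deformation to some $\F_r\in\UU(r)\setminus\UU(r)^{\rm ext}$. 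Because slope--stability is open in flat families, it suffices to prove that any such general $\F_r$ is slope--stable. Note moreover that $\E_r$ itself is slope--semistable (as an extension of Ulrich bundles of the same slope $\mu$), so openness of slope--semistability ensures that the general $\F_r$ is already slope--semistable.

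Suppose for contradiction that $\F_r$ is strictly slope--semistable. Then there is a saturated subsheaf $\G\subset\F_r$ of some rank $s$, $1\leq s\leq r-1$, with $\mu(\G)=\mu=\mu(\F_r/\G)$. I would specialize $\F_r$ along a curve in $\UU(r)$ back to $\E_r$, using the relative Quot scheme of quotients with the appropriate Hilbert polynomial (which is proper over $\UU(r)$) and, if necessary, saturating the limit, to produce a saturated destabilizing subsheaf $\G_0\subset\E_r$ of rank $s$ with torsion--free quotient. Applying Lemma \ref{lemma:uniquedest}(ii) to $\E_r\in\UU(r)^{\rm ext}$, I obtain $\G_0\cong\F_{r-1}$ and $\E_r/\G_0\cong L_{\epsilon_r}$, which forces $s=r-1$ and pins the Chern classes of $\G$ and $\F_r/\G$ to those of $\F_{r-1}$ and $L_{\epsilon_r}$, respectively.

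Since $c_2(L_{\epsilon_r})=0$, writing the torsion--free rank--one sheaf $\F_r/\G$ as $M\otimes\mathcal{I}_Z$ for a line bundle $M$ and a zero--dimensional subscheme $Z$ gives $\length(Z)=0$, so $Z=\emptyset$ and $\F_r/\G\cong M$ is a line bundle; simple--connectedness of $X_n$ together with $c_1(M)=c_1(L_{\epsilon_r})$ then yields $M\cong L_{\epsilon_r}$. Thus $\F_r$ sits in a short exact sequence $0\to\G\to\F_r\to L_{\epsilon_r}\to 0$, and Ulrichness of $\F_r$ and $L_{\epsilon_r}$ forces $\G$ to be Ulrich with the Chern classes of $\F_{r-1}$, whence $\G\in\UU(r-1)$ and $\F_r\in\UU(r)^{\rm ext}$, contradicting our choice. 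The step I expect to be the main obstacle is making the limit argument precise: flat limits of saturated subsheaves can develop torsion in the quotient, so some care (e.g.\ via a Harder--Narasimhan filtration in families, or by saturating the flat limit) is needed to ensure that the produced $\G_0$ is indeed saturated in $\E_r$ with torsion--free quotient, so that the clean form of Lemma \ref{lemma:uniquedest}(ii) applies.
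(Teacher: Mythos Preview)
Your proposal is correct and follows essentially the same inductive specialization-and-dimension-count strategy as the paper. The one point worth noting, since you flag it yourself, is that the paper sidesteps the saturation obstacle entirely: rather than saturating the limit subsheaf and invoking the torsion-free clause of Lemma \ref{lemma:uniquedest}(ii), it applies only the dual conclusion $\G^{(0)*}\cong\F_{r-1}^*$, $(\F^{(0)}/\G^{(0)})^*\cong L_{\epsilon_r}^*$, which needs no hypothesis on the quotient, and then uses that $\G^{(t)}$ and $\Q^{(t)}$ are already locally free (being Ulrich) for $t\neq 0$ to conclude $\G^{(t)}\in\UU(r-1)$ and $\Q^{(t)}\cong L_{\epsilon_r}$ directly.
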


\begin{proof}
  We use induction on $r$, the result being trivially true for $r=1$. 

 Assume $r \geq 2$ and that the general member of $\UU(r)$ is not slope--stable, whereas the general member of $\UU(r-1)$ is. Then we may find a one-parameter family of bundles $\{\F^{(t)}\}$ over the disc $\DD$ such that $\F^{(t)}$ is a general member of $\UU(r)$ for $t \neq 0$ and $\F^{(0)}$ lies in $\UU(r)^{\rm ext}$, and such that we have a destabilizing sequence
\begin{equation} \label{eq:destat1} \xymatrix{
    0 \ar[r] & \G^{(t)} \ar[r] & \F^{(t)} \ar[r] & \Q^{(t)} \ar[r] & 0}
  \end{equation}
  for $t \neq 0$, which we can take to be saturated, that is, such that $\Q^{(t)}$ is torsion free, whence so that $\G^{(t)}$ and $\Q^{(t)}$ are (Ulrich) vector bundles  (see \cite[Thm. 2.9]{ch} or \cite[(3.2)]{be2}).

  The limit of $\PP(\Q^{(t)}) \subset \PP(\F^{(t)})$ defines a subvariety of $\PP(\F^{(0)})$ of the same dimension as  $\PP(\Q^{(t)})$,   whence a coherent sheaf $\Q^{(0)}$ of rank $\rk(\Q^{(t)})$ with a surjection $\F^{(0)} \to \Q^{(0)}$. Denoting by $\G^{(0)}$ its kernel, we have
$\rk(\G^{(0)})=\rk(\G^{(t)})$ and $c_1(\G^{(0)})=c_1(\G^{(t)})$. Hence, \eqref{eq:destat1} specializes to a destabilizing sequence for $t=0$. 
Lemma \ref{lemma:uniquedest} yields  that ${\G^{(0)}}^*$ (respectively,
${\Q^{(0)}}^*$) 
is the dual of a member of  $\mathfrak{U}(r-1)$ 	 (resp., the dual of $L_{\epsilon_r}$).
It follows that
${\G^{(t)}}^*$ (resp., ${\Q^{(t)}}^*$)
is a deformation of the dual of a member of  $\mathfrak{U}(r-1)$  (resp., a deformation of $L_{\epsilon_r}^*$), whence that $\G^{(t)}$ is a deformation of a member of  $\mathfrak{U}(r-1)$, as both are locally free, and $\Q^{(t)} \cong L_{\epsilon_r}$, for the same reason. 

In other words, the general member of $\UU(r)$ 
is an extension of $L_{\epsilon_r}$ by a member of
 $\mathfrak{U}(r-1)$. Hence $\UU(r)=\UU(r)^{\rm ext}$, contradicting Lemma \ref{lemma:dimU}. 
\end{proof}

We have therefore proved:

\begin{thm} \label{thm:higher}
  For any $r \geq 1$, the blown--up plane $X_n$,  with $n\geq 2$, carries slope--stable rank--$r$ Ulrich bundles, and their moduli space  contains a reduced and irreducible component  of dimension $\frac{1}{2}(r^2-\epsilon_r)(h-1)+\epsilon_{r+1}$.
\end{thm}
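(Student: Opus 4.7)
The theorem is the capstone of the chain of lemmas just developed, so my plan is essentially to assemble them by induction on $r$. For the base case $r=1$, Lemma \ref{lemma:0} together with Theorem \ref{thm:main1} supplies the Ulrich line bundle $L_1$ (and $L_0$); a line bundle is trivially slope--stable, its moduli point is reduced, and the dimension formula evaluates to $\tfrac{1}{2}(1-\epsilon_1)(h-1)+\epsilon_2=0$, matching the fact that a line bundle is a zero--dimensional point of $\Pic(X_n)$.

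For the inductive step $r\geq 2$, assume the general member of $\UU(r-1)$ is slope--stable. The Proposition immediately preceding the theorem then gives that the general member $\F_r\in \UU(r)$ is slope--stable, which already yields the existence of slope--stable rank--$r$ Ulrich bundles. Lemma \ref{lemma:dimU}, whose hypotheses are now in force, shows that $\UU(r)$ is generically smooth of dimension $\tfrac{1}{2}(r^2-\epsilon_r)(h-1)+\epsilon_{r+1}$ and strictly contains the extension locus $\UU(r)^{\rm ext}$. Since a slope--stable bundle is simple, $h^0(\F_r\otimes \F_r^*)=1$, and combined with $h^2(\F_r\otimes \F_r^*)=0$ from Lemma \ref{lemma:genUr}(ii) standard deformation theory (as used in the proof of Lemma \ref{lemma:dimU}) identifies this tangent--level dimension with the germ of the moduli space of simple Ulrich bundles of rank $r$ at $[\F_r]$. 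Hence that germ is smooth, giving a reduced component of the stated dimension in the moduli space.

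The one point that requires a little care is the \emph{irreducibility} of the component. I would argue it as follows: $\UU(r)$ is defined as the modular family of the bundles $\E_r$ built iteratively from $\UU(r-1)$ by nonsplit extensions by $L_{\epsilon_{r+1}}$, so irreducibility propagates from the (inductively irreducible) $\UU(r-1)$ through a projectivized $\Ext^1$--bundle to an irreducible $\UU(r)^{\rm ext}$. The strict inclusion $\UU(r)^{\rm ext}\subsetneq \UU(r)$ proved in Lemma \ref{lemma:dimU}, together with generic smoothness, forces the generic point of $\UU(r)$ to lie in a single irreducible component of the moduli space, which is therefore reduced and irreducible of the asserted dimension. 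Packaging the induction then yields the theorem.

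The main obstacle is not any new computation but rather verifying that the construction yields a component of the \emph{moduli space}: one must check that the image of $\UU(r)$ in the moduli of simple sheaves is open on the smooth locus and that the obstruction space vanishes; both are handled by Lemma \ref{lemma:genUr}(ii) and Lemma \ref{lemma:uniquedest}(i), so no genuinely new input is needed beyond assembling the inductive scaffolding.
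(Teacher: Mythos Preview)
Your proposal is correct and follows essentially the same approach as the paper, which in fact offers no separate proof at all: the theorem is stated immediately after the Proposition with the phrase ``We have therefore proved,'' so it is simply the aggregate of Lemma~\ref{lemma:0}, Lemma~\ref{lemma:genUr}, Lemma~\ref{lemma:uniquedest}, Lemma~\ref{lemma:dimU}, and the Proposition on slope--stability. Your explicit assembly of the induction, including the remarks on irreducibility of the component via the extension locus, is a faithful (and slightly more detailed) unpacking of what the paper leaves implicit.
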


\end{document}